\newtheorem{theorem}{Theorem}[section]
\newtheorem{lemma}[theorem]{Lemma}
\newtheorem{proposition}[theorem]{Proposition}
\newtheorem{assumption}[theorem]{Assumption}
\newtheorem{remark}[theorem]{Remark}
\begin{document}
\setlength\arraycolsep{2pt}
\title[Tight Risk Bound for High Dimensional Time Series Completion]{Tight Risk Bound for High Dimensional Time Series Completion}
\author{Pierre ALQUIER}
\address{*RIKEN AIP, Tokyo, Japan}
\email{pierre.alquier.stat@gmail.com}
\author{Nicolas MARIE$^{\dag}$}
\address{$^{\dag,\diamond}$Laboratoire Modal'X, Universit\'e Paris Nanterre, Nanterre, France}
\email{nmarie@parisnanterre.fr}
\author{Am\'elie ROSIER$^{\diamond}$}
\address{$^{\diamond}$ESME Sudria, Paris, France}
\email{amelie.rosier@esme.fr}
\keywords{}
\date{}
\maketitle
\noindent
%


%
\begin{abstract}
Initially designed for independent datas, low-rank matrix completion was successfully applied in many domains to the reconstruction of partially observed high-dimensional time series. However, there is a lack of theory to support the application of these methods to dependent datas. In this paper, we propose a general model for multivariate, partially observed time series. We show that the least-square method with a rank penalty leads to reconstruction error of the same order as for independent datas. Moreover, when the time series has some additional properties such as periodicity or smoothness, the rate can actually be faster than in the independent case.
\end{abstract}
\tableofcontents
%


%
\section{Introduction}

Low-rank matrix completion methods were studied in depth in the past 10 years. This was partly motivated by the popularity of the Netflix prize~\cite{BL07} in the machine learning community. The first theoretical papers on the topic covered matrix recovery from a few entries observed exactly~\cite{CR09,CT10,keshavan}. The same problem was studied with noisy observations in~\cite{CP10,CP11,keshavan2,gross}. The minimax rate of estimation was derived by~\cite{KLT11}. Since then, many estimators and many variants of this problem were studied in the statistical literature, see~\cite{nega,klopp,lafond,klopp2,mai,suzuki,cottet,CKLN18,AR20,mai0,mai1} for instance.
\\
High-dimensional time series often have strong correlation, and it is thus natural to assume that the matrix that contains such a series is low-rank (exactly, or approximately). Many econometrics models are designed to generate series with such a structure. For example, the factor model studied in~\cite{koop,lam,lam2,giordani,chan,hallin} can be interpreted as a high-dimensional autoregressive (AR) process with a low-rank transition matrix. This model (and variants) was used and studied in signal processing~\cite{BLM19} and statistics~\cite{nega,ABDG20}. Other papers focused on a simpler model where the series is represented by a deterministic low-rank trend matrix plus some possibly correlated noise. This model was used by~\cite{yu} to perform prediction, and studied in~\cite{AM19}.
\\
It is thus tempting to use low-rank matrix completion algorithms to recover partially observed high-dimensional time series, and this was indeed done in many applications:~\cite{xie,tsag,eshke} used low-rank matrix completion to reconstruct data from multiple sensors. Similar techniques were used by~\cite{edf,edf2} to recover the electricity consumption of many households from partial observations, by~\cite{ABDIK18} on panel data in economics, and by~\cite{poulos,BN19} for policy evaluation. Some algorithms were proposed to take into account the temporal updates of the observations (see \cite{shi}). However, it is important to note that 1) all the aforementioned theory on matrix completion, for example~\cite{KLT11}, was only developed for independent observations, and 2) most papers using these techniques on time series did not provide any theoretical justification that it can be used on dependent observations. One must however mention that~\cite{gillard} obtained theoretical results for univariate time series prediction by embedding the time series into a Hankel matrix and using low-rank matrix completion.
\\
In this paper, we study low-rank matrix completion for partially observed high-dimensional time series that indeed exhibit a temporal dependence. We provide a risk bound for the reconstruction of a rank-$k$ matrix, and a model selection procedure for the case where the rank $k$ is unknown. Under the assumption that the univariate series are $\phi$-mixing, we prove that we can reconstruct the matrix with a similar error than in the i.i.d case in~\cite{KLT11}. If, moreover, the time series has some additional properties, as the ones studied in~\cite{AM19} (periodicity or smoothness), the error can even be smaller than in the i.i.d case. This is confirmed by a short simulation study.
\\
From a technical point of view, we start by a reduction of the matrix completion problem to a structured regression problem as in~\cite{mai}. But on the contrary to~\cite{mai}, we have here dependent observations. We thus follow the technique of~\cite{ALW13} to obtain risk bounds for dependent observations. In~\cite{ALW13}, it is shown that one can obtain risk bounds for dependent observations that are similar to the risk bounds for independent observations under a $\phi$-mixing assumption, using Samson's version of Bernstein inequality~\cite{SAMSON00}. For model selection, we follow the guidelines of~\cite{massart}: we introduce a penalty proportional to the rank. Using the previous risk bounds, we show that this leads to an optimal rank selection.
\\
The paper is organized as follows. In Section~\ref{section_setting}, we introduce our model, and the notations used throughout the paper. In Section~\ref{section_risk_bounds}, we provide the risk analysis when the rank $k$ is known. We then describe our rank selection procedure in Section~\ref{section_model_selection} and show that it satisfies a sharp oracle inequality. The numerical experiments are in Section~\ref{section_experiments}. All the proofs are gathered in Section~\ref{section_proofs}.
\\
\\
\textbf{Notations and basic definitions.} Throughout the paper, $\mathcal M_{d,T}(\mathbb R)$ is equipped with the Fr\"ob\'enius scalar product
\begin{displaymath}
\langle .,.\rangle_{\mathcal F} : (\mathbf A,\mathbf B)\in\mathcal M_{d,T}(\mathbb R)^2
\longmapsto
\textrm{trace}(\mathbf A^*\mathbf B) =
\sum_{j,t}\mathbf A_{j,t}\mathbf B_{j,t}
\end{displaymath}
or with the spectral norm
\begin{displaymath}
\|.\|_{\textrm{op}} :\mathbf A\in\mathcal M_{d,T}(\mathbb R)
\longmapsto
\sup_{\|x\| = 1}\|\mathbf Ax\| =\sigma_1(\mathbf A).
\end{displaymath}
Let us finally remind the definition of the $\phi$-mixing condition on stochastic processes. Given two $\sigma$-algebras $\mathcal A$ and $\mathcal B$, we define the $\phi$-mixing coefficient between $\mathcal A$ and $\mathcal B$ by
\begin{displaymath}
\phi(\mathcal A,\mathcal B) :=
\sup\left\{|\mathbb P(B) -\mathbb P(B|A)|
\textrm{ $;$ }(A,B)\in\mathcal A\times\mathcal B\textrm{, }\mathbb P(A)\neq 0\right\}.
\end{displaymath}
When $\mathcal{A}$ and $\mathcal{B}$ are independent, $\phi(\mathcal A,\mathcal B)=0$, more generally, this coefficient measure how dependent $\mathcal{A}$ and $\mathcal{B}$ are. Given a process $(Z_t)_{t\in\mathbb N}$, we define its $\phi$-mixing coefficients by
\begin{displaymath}
\phi_Z(i) :=\sup\left\{\phi(A,B)
\textrm{ $;$ }t\in\mathbb Z\textrm{, }A\in\sigma(X_h,h\leqslant t)\textrm{, }B\in\sigma(X_\ell,\ell\geqslant t + i)\right\}.
\end{displaymath}
Some properties and examples of applications of $\phi$-mixing coefficients can be found in~\cite{doukhan-mixing}.
%


%
\section{Setting of the problem and notations}\label{section_setting}
Consider $d,T\in\mathbb N^*$ and a $d\times T$ random matrix $\mathbf M$. Assume that the rows $\mathbf M_{1,.},\dots,\mathbf M_{d,.}$ are time series and that $Y_1,\dots,Y_n$ are $n\in\{1,\dots,d\times T\}$ noisy entries of the matrix $\mathbf M$:
\begin{equation}\label{noisy_model}
Y_i =\textrm{trace}(\mathbf X_{i}^{*}\mathbf M) +\xi_i
\textrm{ $;$ }i\in\{1,\dots,n\},
\end{equation}
where $\mathbf X_1,\dots,\mathbf X_n$ are i.i.d random matrices distributed on
\begin{displaymath}
\mathcal X :=
\{e_{\mathbb R^d}(j)e_{\mathbb R^T}(t)^*\textrm{ $;$ }
1\leqslant j\leqslant d\textrm{ and }
1\leqslant t\leqslant T\},
\end{displaymath}
and $\xi_1,\dots,\xi_n$ are i.i.d. centered random variables, with standard deviation $\sigma_{\xi} > 0$, such that $\mathbf X_i$ and $\xi_i$ are independent for every $i\in\{1,\dots,n\}$. Note that, as $\mathbf X_1,\dots,\mathbf X_n$ are independent, we do not exclude multiple observations of the same entry. That is, our model of matrix completion is the one studied in~\cite{KLT11,nega,mai} rather than the model in~\cite{CP10,CP11} where this is not possible.
\\
Let us now describe the time series structure of each  $\mathbf M_{1,.},\dots,\mathbf M_{d,.}$. We assume that each series $\mathbf M_{j,.}$ can be decomposed as a deterministic component $\mathbf\Theta^0_{j,.}$ plus some random noise $\varepsilon_{j,.}$. The noise can exhibit some temporal dependence: $\varepsilon_{j,t}$ will not be independent from $\varepsilon_{j,t'}$ in general. Moreover, as discussed in~\cite{AM19}, $\mathbf\Theta_{j,.}^{0}$ can have some more structure: $\mathbf\Theta_{j,.}^{0} =\mathbf T_{j,.}^{0}\mathbf\Lambda$ for some known matrix ${\bf \Lambda}$. Examples of such structures (smoothness or periodicity) are discussed below. This gives
\begin{equation}\label{ts_model}
\left\{
\begin{array}{rcl}
 \mathbf M & = &
 \mathbf\Theta^0 +\varepsilon\\
 \mathbf\Theta^0 & = &
 \mathbf T^0\mathbf\Lambda
\end{array}
\right.,
\end{equation}
where $\varepsilon$ is a $d\times T$ random matrix having i.i.d. and centered rows, $\mathbf\Lambda\in\mathcal M_{\tau,T}(\mathbb C)$ ($\tau\leqslant T$) is known and $\mathbf T^0$ is an unknown element of $\mathcal M_{d,\tau}(\mathbb R)$ such that
\begin{equation}\label{condition_trend}
\sup_{j,t}|\mathbf T_{j,t}^{0}|
\leqslant
\frac{\mathfrak m_0}{\mathfrak m_{\mathbf\Lambda}(\tau)}
\textrm{ with }
\mathfrak m_0 > 0
\textrm{ and }
1\vee\sup_{{\bf T}\in\mathcal M_{d,\tau}(\mathbb R)}\left\{
\frac{\sup_{j,t}|(\mathbf T\mathbf\Lambda)_{j,t}|}{\sup_{j,\ell}|\mathbf T_{j,\ell}|}
\right\}
\leqslant\mathfrak m_{\mathbf\Lambda}(\tau) <\infty.
\end{equation}
Note that this leads to
\begin{displaymath}
\sup_{j,t}|\mathbf\Theta_{j,t}^{0}|\leqslant
\sup_{j,\ell}|\mathbf T_{j,\ell}^{0}|\cdot
\frac{\sup_{j,t}|(\mathbf T^0\mathbf\Lambda)_{j,t}|}{\sup_{j,\ell}|\mathbf T_{j,\ell}^{0}|}
\leqslant\mathfrak m_0
\end{displaymath}
and
\begin{displaymath}
\mathfrak m_{\bf\Lambda} :=
\sup_{j,t}|\mathbf\Lambda_{j,t}| <\infty.
\end{displaymath}
We now make the additional assumption that the deterministic component is low-rank, reflecting the strong correlation between the different series. Precisely, we assume that $\mathbf T^0$ is of rank $k\in\{1,\dots,d\wedge T\}$: $\mathbf T^0 =\mathbf U^0\mathbf V^0$ with $\mathbf U^0\in\mathcal M_{d,k}(\mathbb R)$ and $\mathbf V^0\in\mathcal M_{k,\tau}(\mathbb R)$. The rows of the matrix $\mathbf V^0$ may be understood as latent factors. By Equations (\ref{noisy_model}) and (\ref{ts_model}), for any $i\in\{1,\dots,n\}$,
\begin{equation}\label{ts_model_bruite}
Y_i =\textrm{trace}(\mathbf X_{i}^{*}\mathbf\Theta^0) +
\overline\xi_i
\end{equation}
with $\overline\xi_i :=\textrm{trace}(\mathbf X_{i}^{*}\varepsilon) +\xi_i$. It is reasonable to assume that $\mathbf X_i$ and $\xi_i$, which are random terms related to the observation instrument, are independent to $\varepsilon$, which is the stochastic component of the observed process. Then, since $\xi_i$ is a centered random variable and $\varepsilon$ is a centered random matrix,
\begin{displaymath}
\mathbb E(\overline\xi_i) =
\mathbb E(\langle\mathbf X_i,\varepsilon\rangle_{\mathcal F}) +\mathbb E(\xi_i) =
\sum_{j = 1}^{d}
\sum_{t = 1}^{T}
\mathbb E((\mathbf X_i)_{j,t})\mathbb E(\varepsilon_{j,t}) = 0.
\end{displaymath}
This legitimates to consider the following least-square estimator of the matrix $\mathbf\Theta^0$:
\begin{equation}\label{estimator_definition}
\left\{
\begin{array}{rcl}
 \widehat{\mathbf\Theta}_{k,\tau} & = & \widehat{\mathbf T}_{k,\tau}\mathbf\Lambda\\
 \widehat{\mathbf T}_{k,\tau} & \in & \displaystyle{\arg\min_{\mathbf T\in\mathcal S_{k,\tau}}
 r_n(\mathbf T\mathbf\Lambda)}
\end{array}\right.,
\end{equation}
where $\mathcal S_{k,\tau}$ is a subset of
\begin{eqnarray*}
 \mathcal M_{d,k,\tau} & := &
 \left\{\mathbf U\mathbf V
 \textrm{ $;$ }
 (\mathbf U,\mathbf V)\in\mathcal M_{d,k}(\mathbb R)\times
 \mathcal M_{k,\tau}(\mathbb R)
 \textrm{ s.t. }
 \sup_{j,\ell}
 |\mathbf U_{j,\ell}|\leqslant\sqrt{\frac{\mathfrak m_0}
 {k\mathfrak m_{\mathbf\Lambda}(\tau)}}\right.\\
 & &
 \hspace{9cm}
 \left.\textrm{ and }
 \sup_{\ell,t}|\mathbf V_{\ell,t}|\leqslant\sqrt{\frac{\mathfrak m_0}
 {k\mathfrak m_{\mathbf\Lambda}(\tau)}}\right\},
\end{eqnarray*}
and
\begin{displaymath}
r_n(\mathbf A) :=
\frac{1}{n}
\sum_{i = 1}^{n}
(Y_i -\langle\mathbf X_i,\mathbf A\rangle_{\mathcal F})^2
\textrm{ $;$ }
\forall\mathbf A
\in\mathcal M_{d,T}(\mathbb R).
\end{displaymath}
%


%
\begin{remark}\label{matrices_subsets}
In many cases, we will simply take $\mathcal S_{k,\tau} =\mathcal M_{d,k,\tau}$. However, in many applications, it is natural to impose stronger constraints on the estimators. For example, in nonnegative matrix factorization, we would have
\begin{displaymath}
\mathcal S_{k,\tau} =
\{\mathbf U\mathbf V\textrm{ $;$ }(\mathbf U,\mathbf V)\in\mathcal M_{d,k,\tau}\textrm{ s.t. }\forall j,\ell,t\textrm{, }\mathbf U_{j,\ell}\geqslant 0\textrm{ and }\mathbf V_{\ell,t}\geqslant 0\}
\end{displaymath}
(see e.g.~\cite{edf}). So for now, we only assume that $\mathcal S_{k,\tau}\subset\mathcal M_{d,k,\tau}$. Later, we will specify some sets $\mathcal S_{k,\tau}$.
\end{remark}
\noindent
Let us conclude this section with two examples of matrices $\mathbf\Lambda$ corresponding to usual time series structures. On the one hand, if the trend of the multivalued time series $\mathbf M$ is $\tau$-periodic, with $T\in\tau\mathbb N^*$, one can take $\mathbf\Lambda = (\mathbf I_{\tau}|\cdots|\mathbf I_{\tau})$, and then $\mathfrak m_{\bf\Lambda} = 1$ and $\mathfrak m_{\bf\Lambda}(\tau) := 1$ works. So, in this case, note that the usual matrix completion model of \cite{KLT11} is part of our framework by taking $T =\tau$. On the other hand, assume that for any $j\in\{1,\dots,d\}$, the trend of $\mathbf M_{j,.}$ is a sample on $\{0,1/T,2/T,\dots,1\}$ of a function $f_j : [0,1]\rightarrow\mathbb R$ belonging to a Hilbert space $\mathcal H$. In this case, if $(\mathbf e_n)_{n\in\mathbb Z}$ is a Hilbert basis of $\mathcal H$, one can take $\mathbf\Lambda = (\mathbf e_n(t/T))_{|n|\leqslant N,1\leqslant t\leqslant T}$. For instance, if $f_j\in\mathbb L^2([0,1];\mathbb R)$, a natural choice is the Fourier basis $\mathbf e_n(t) = e^{2i\pi nt/T}$, and then $\mathfrak m_{\bf\Lambda} = 1$ and
\begin{displaymath}
\frac{\sup_{j,t}|(\mathbf T\mathbf\Lambda)_{j,t}|}{\sup_{j,\ell}|\mathbf T_{j,\ell}|}
\leqslant
\frac{1}{\sup_{j,\ell}|\mathbf T_{j,\ell}|}\cdot
\sup_{j,t}\sum_{\ell = 1}^{\tau}|\mathbf T_{j,\ell}e^{2i\pi nt/T}|
\leqslant\tau =:\mathfrak m_{\bf\Lambda}(\tau).
\end{displaymath}
Here, the usual matrix completion model of \cite{KLT11} is not part of our framework because $T$ is possibly huge and to take $\tau = T$ implies that the coefficients of the matrix $\mathbf T^0$ are all unrealistically small by Condition (\ref{condition_trend}). However, wathever the time series structure taken into account via $\mathbf\Lambda$, our model is designed for small values of $\tau$. Else, the model of \cite{KLT11} is appropriate. So, when $\mathbf\Lambda$ is the previous {\it Fourier matrix}, and in general when $\mathfrak m_{\bf\Lambda}(\tau)$ is a non constant increasing function of $\tau$, we assume that $\tau\in\llbracket 1,\tau_0\rrbracket$ with $\tau_0\ll T$.
%


%
\section{Risk bound on $\widehat{\mathbf T}_{k,\tau}$}
\label{section_risk_bounds}
%


%
\subsection{Upper bound}
First of all, since $\mathbf X_1,\dots,\mathbf X_n$ are i.i.d $\mathcal X$-valued random matrices, there exists a probability measure $\Pi$ on $\mathcal X$ such that
\begin{displaymath}
\mathbb P_{\mathbf X_i} =
\Pi\textrm{ $;$ }
\forall i\in\{1,\dots,n\}.
\end{displaymath}
In addition to the two norms on $\mathcal M_{d,T}(\mathbb R)$ introduced above, let us consider the scalar product $\langle .,.\rangle_{\mathcal F,\Pi}$ defined on $\mathcal M_{d,T}(\mathbb R)$ by
\begin{displaymath}
\langle\mathbf A,\mathbf B\rangle_{\mathcal F,\Pi} :=
\int_{\mathcal M_{d,T}(\mathbb R)}\langle X,\mathbf A\rangle_{\mathcal F}
\langle X,\mathbf B\rangle_{\mathcal F}\Pi(dX)
\textrm{ $;$ }
\forall\mathbf A,\mathbf B\in\mathcal M_{d,T}(\mathbb R).
\end{displaymath}
\textbf{Remarks:}
\begin{enumerate}
 \item For any deterministic $d\times T$ matrices $\mathbf A$ and $\mathbf B$,
 \begin{displaymath}
 \langle\mathbf A,\mathbf B\rangle_{\mathcal F,\Pi} =
 \mathbb E(\langle\mathbf A,\mathbf B\rangle_n)
 \end{displaymath}
 where $\langle .,.\rangle_n$ is the empirical scalar product on $\mathcal M_{d,T}(\mathbb R)$ defined by
 \begin{displaymath}
 \langle\mathbf A,\mathbf B\rangle_n :=
 \frac{1}{n}\sum_{i = 1}^{n}
 \langle\mathbf X_i,\mathbf A\rangle_{\mathcal F}
 \langle\mathbf X_i,\mathbf B\rangle_{\mathcal F}.
 \end{displaymath}
 However, note that this relationship between $\langle .,.\rangle_{\mathcal F,\Pi}$ and $\langle .,.\rangle_n$ doesn't hold anymore when $\mathbf A$ and $\mathbf B$ are random matrices.
 \item Note that if the sampling distribution $\Pi$ is uniform, then $\|.\|_{\mathcal F,\Pi}^{2} = (dT)^{-1}\|.\|_{\mathcal F}^{2}$.
\end{enumerate}
\noindent
\textbf{Notation.} For every $i\in\{1,\dots,n\}$, let $\chi_i$ be the couple of \textit{coordinates} of the nonzero element of $\mathbf X_i$, which is a $\mathcal E$-valued random variable with $\mathcal E =\{1,\dots,d\}\times\{1,\dots,T\}$.
\\
\\
In the sequel, $\varepsilon$, $\xi_1,\dots,\xi_n$ and $\mathbf X_1,\dots,\mathbf X_n$ fulfill the following additional conditions.
%


%
\begin{assumption}\label{assumption_phi_mixing}
The rows of $\varepsilon$ are independent and identically distributed. There is a process $(\varepsilon_t)_{t\in\mathbb Z}$ such that each $\varepsilon_{j,.}$ has the same distribution than $(\varepsilon_1,\dots,\varepsilon_T)$, and such that
\begin{displaymath}
\Phi_{\varepsilon} := 1 +\sum_{i = 1}^{n}\phi_{\varepsilon}(i)^{1/2} <\infty.
\end{displaymath}
\end{assumption}
%


%
\begin{assumption}\label{assumption_boundedness}
There exists a deterministic constant $\mathfrak m_{\varepsilon} > 0$ such that
\begin{displaymath}
\sup_{j,t}|\varepsilon_{j,t}|\leqslant
\mathfrak m_{\varepsilon}.
\end{displaymath}
Moreover, there exist two deterministic constants $\mathfrak c_{\xi},\mathfrak v_{\xi} > 0$ such that
\begin{displaymath}
\sup_{i\in\{1,\dots,n\}}\mathbb E(\xi_{i}^{2})\leqslant\mathfrak v_{\xi}
\end{displaymath}
and, for every $q\geqslant 3$,
\begin{displaymath}
\sup_{i\in\{1,\dots,n\}}\mathbb E(|\xi_i|^q)\leqslant
\frac{\mathfrak v_{\xi}\mathfrak c_{\xi}^{q - 2}q!}{2}.
\end{displaymath}
\end{assumption}
\noindent
This assumption means that the $\varepsilon_{j,t}$'s are bounded, and that the $\xi_i$'s are sub-exponential random variables. Sub-exponential random variables include bounded and Gaussian variables as special cases. Note that this is the assumption made on the noise for the matrix completion in the i.i.d. framework in the papers mentioned above~\cite{mai,KLT11}. The boundedness of the $\varepsilon_{j,t}$'s can be seen as quite restrictive. However, we are not aware of any way to avoid this assumption in this setting. Indeed, it allows to apply Samson's concentration inequality for $\phi$-mixing processes (see Samson \cite{SAMSON00}). In \cite{ALW13}, the authors prove sharp sparsity inequalities under a similar assumption, using Samson's inequality. They also show that the other concentration inequalities known for time series lead to slow rates of convergence.
%


%
\begin{assumption}\label{assumption_probabilities}
There is a constant $\mathfrak c_{\Pi} > 0$ such that
\begin{displaymath}
\Pi(\{e_{\mathbb{R}^d}(j)e_{\mathbb{R}^T}(t)^*\})
\leqslant
\frac{\mathfrak c_{\Pi}}{dT}
\textrm{ $;$ }\forall (j,t)\in\mathcal E.
\end{displaymath}
\end{assumption}
\noindent
Note that when the sampling distribution $\Pi$ is uniform, Assumption \ref{assumption_probabilities} is trivially satisfied with $\mathfrak c_{\Pi} = 1$.
%


%
\begin{theorem}\label{explicit_risk_bound}
Let $\alpha\in(0,1)$. Under Assumptions \ref{assumption_phi_mixing}, \ref{assumption_boundedness} and \ref{assumption_probabilities}, if $n\geqslant \max(d,\tau)$, then
\begin{displaymath}
\|\widehat{\mathbf\Theta}_{k,\tau} -\mathbf\Theta^0\|_{\mathcal F,\Pi}^{2}
\leqslant
3\min_{\mathbf T\in\mathcal S_{k,\tau}}
\|(\mathbf T -\mathbf T^0)\mathbf\Lambda\|_{\mathcal F,\Pi}^2
+\mathfrak c_{\ref{explicit_risk_bound}}\left[k(d +\tau)\frac{\log(n)}{n}
+\frac{1}{n}\log\left(\frac{4}{\alpha}\right)\right]
\end{displaymath}
with probability larger than $1 -\alpha$, where $\mathfrak c_{\ref{explicit_risk_bound}} $ is a constant depending only on $\mathfrak m_0$, $\mathfrak v_\xi$, $\mathfrak c_\xi$, $\mathfrak m_\varepsilon$, $\mathfrak m_{\bf\Lambda}$, $\Phi_{\varepsilon}$ and $\mathfrak{c}_\Pi$.
\end{theorem}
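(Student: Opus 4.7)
The plan is to follow the classical least-squares/empirical-process scheme of \cite{mai,KLT11}, adapted to the dependent noise via Samson's inequality as in \cite{ALW13}. First I would derive the basic inequality: by definition of $\widehat{\mathbf T}_{k,\tau}$, $r_n(\widehat{\mathbf\Theta}_{k,\tau})\le r_n(\mathbf T\mathbf\Lambda)$ for every $\mathbf T\in\mathcal S_{k,\tau}$, so substituting (\ref{ts_model_bruite}) with $\overline\xi_i=\xi_i+\varepsilon_{\chi_i}$ and expanding the squares gives, for every $\mathbf T\in\mathcal S_{k,\tau}$,
\begin{equation*}
\|\widehat{\mathbf\Theta}_{k,\tau}-\mathbf\Theta^0\|_n^2\le \|\mathbf T\mathbf\Lambda-\mathbf\Theta^0\|_n^2+\frac{2}{n}\sum_{i=1}^n\xi_i\mathbf D_{\chi_i}+\frac{2}{n}\sum_{i=1}^n\varepsilon_{\chi_i}\mathbf D_{\chi_i},
\end{equation*}
where $\mathbf D:=\widehat{\mathbf\Theta}_{k,\tau}-\mathbf T\mathbf\Lambda$ and $\|\cdot\|_n$ is the norm induced by $\langle\cdot,\cdot\rangle_n$. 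Since $\mathbf D$ has the form $(\mathbf U\mathbf V-\mathbf T)\mathbf\Lambda$, its rank is at most $2k$ and its sup-norm is controlled by (\ref{condition_trend}).

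Next, both cross-product terms must be bounded uniformly over $\mathbf D$. Parametrising $\mathcal S_{k,\tau}$ by pairs $(\mathbf U,\mathbf V)$ lying in a sup-norm ball of $\mathbb R^{k(d+\tau)}$ and exploiting the Lipschitz continuity of $(\mathbf U,\mathbf V)\mapsto\mathbf U\mathbf V\mathbf\Lambda$ (with a constant controlled by $\mathfrak m_0$ and $\mathfrak m_{\bf\Lambda}$ and independent of $n$), I construct a $\delta$-net of log-cardinality $O(k(d+\tau)\log(1/\delta))$ and choose $\delta\sim 1/n$ so that the discretisation error is absorbed in the final remainder. On each net element I apply two exponential inequalities. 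For the i.i.d.\ piece $(2/n)\sum_i\xi_i\mathbf D_{\chi_i}$ I use Bernstein's inequality: since $(\xi_i,\mathbf X_i)$ are i.i.d.\ and $\xi_i$ is sub-exponential by Assumption \ref{assumption_boundedness}, the conditional variance is bounded by $\mathfrak v_\xi\|\mathbf D\|_{\mathcal F,\Pi}^2$. For the dependent piece $(2/n)\sum_i\varepsilon_{\chi_i}\mathbf D_{\chi_i}$ I invoke Samson's concentration inequality from \cite{SAMSON00}: conditioning on $(\chi_i)_i$ and exploiting that the rows of $\varepsilon$ are independent with each row $\phi$-mixing, Samson gives a sub-Gaussian tail whose variance proxy involves $\Phi_\varepsilon^2\|\mathbf D\|_{\mathcal F,\Pi}^2/n$ thanks to Assumption \ref{assumption_probabilities}. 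A union bound over the net and a peeling argument in $\|\mathbf D\|_{\mathcal F,\Pi}$ then convert these variance-dependent deviations into the announced remainder $k(d+\tau)\log(n)/n+\log(4/\alpha)/n$, while the $\|\mathbf D\|_{\mathcal F,\Pi}$-linear factor gets absorbed into the left-hand side.

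To pass from the empirical norm to $\|\cdot\|_{\mathcal F,\Pi}$, a similar covering shows that $\|\mathbf D\|_n^2$ lies between $\tfrac12\|\mathbf D\|_{\mathcal F,\Pi}^2$ and $\tfrac32\|\mathbf D\|_{\mathcal F,\Pi}^2$ uniformly on the rank-$2k$ bounded class, modulo the same $O(k(d+\tau)\log(n)/n)$ slack, by Bernstein applied to the bounded i.i.d.\ variables $\mathbf D_{\chi_i}^2$. Plugging these sandwich bounds into the basic inequality and minimising over $\mathbf T\in\mathcal S_{k,\tau}$ produces the factor $3$ on the approximation error and the stated deviation term, with constants depending only on $\mathfrak m_0,\mathfrak v_\xi,\mathfrak c_\xi,\mathfrak m_\varepsilon,\mathfrak m_{\bf\Lambda},\Phi_\varepsilon,\mathfrak c_\Pi$. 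I expect the delicate step to be Samson's inequality on the mixed term $(1/n)\sum_i\varepsilon_{\chi_i}\mathbf D_{\chi_i}$: the random coordinates $\chi_i$ scramble the time axis along which $\varepsilon_{j,\cdot}$ is $\phi$-mixing, so one must condition on $(\chi_i)_i$, reorder the observations row-by-row to expose the mixing structure of each $\varepsilon_{j,\cdot}$, apply Samson per row, then take expectation over $(\chi_i)_i$ while keeping the variance proportional to $\|\mathbf D\|_{\mathcal F,\Pi}^2$ rather than the much larger $\|\mathbf D\|_{\mathcal F}^2$, which is what ultimately produces the sharp $k(d+\tau)\log(n)/n$ rate.
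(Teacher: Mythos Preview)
Your overall strategy --- basic inequality, $\epsilon$-net on the $(\mathbf U,\mathbf V)$ parametrisation with log-cardinality $O(k(d+\tau)\log(1/\delta))$, Bernstein for the $\xi$-part, Samson for the $\varepsilon$-part --- is the paper's strategy. Two differences are worth flagging.

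First, the paper does not introduce a separate sandwich bound between $\|\cdot\|_n$ and $\|\cdot\|_{\mathcal F,\Pi}$. It works directly with the pair $(R,r_n)$: since $R(\mathbf T\mathbf\Lambda)-R(\mathbf T^0\mathbf\Lambda)=\|(\mathbf T-\mathbf T^0)\mathbf\Lambda\|_{\mathcal F,\Pi}^2$, one exponential inequality controlling
\[
\left(1\pm \mathfrak c\,\frac{\lambda}{n}\right)\bigl(R(\mathbf T\mathbf\Lambda)-R(\mathbf T^0\mathbf\Lambda)\bigr)\mp\bigl(r_n(\mathbf T\mathbf\Lambda)-r_n(\mathbf T^0\mathbf\Lambda)\bigr)
\]
already gives both directions, and the constant $3$ is simply $(1+\mathfrak c\lambda/n)/(1-\mathfrak c\lambda/n)\le 3$ for $\lambda=n/(2\mathfrak c)$. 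This avoids the extra union bound your sandwich argument would require, and no peeling is needed because every Bernstein/Samson variance in the proof is itself a multiple of the excess risk.

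Second --- and this is where your plan has a genuine gap --- for the $\varepsilon$-term you condition on $(\chi_i)_i$ and then apply Samson. After that conditioning, the linear functional of $\varepsilon$ has coefficients $N_{j,t}\mathbf D_{j,t}$ where $N_{j,t}=\#\{i:\chi_i=(j,t)\}$, so Samson's variance proxy is $\Phi_\varepsilon^2\mathfrak m_\varepsilon^2\sum_{j,t}N_{j,t}^2\mathbf D_{j,t}^2$, which is \emph{random in $\chi$}. Integrating the resulting MGF over $(\chi_i)_i$ while keeping the bound proportional to $\|\mathbf D\|_{\mathcal F,\Pi}^2$ is exactly the difficulty you flag, and it is not clear it can be done cleanly: the $N_{j,t}$ can pile up, and crude bounds such as $N_{j,t}\le n$ destroy the rate. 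The paper sidesteps this by conditioning the other way. Writing $B_i:=\frac{2}{n}\varepsilon_{\chi_i}\mathbf D_{\chi_i}=D_i+E_i$ with $D_i=B_i-\mathbb E(B_i\mid\varepsilon)$ and $E_i=\mathbb E(B_i\mid\varepsilon)$, conditionally on $\varepsilon$ the $D_i$ are i.i.d.\ and bounded, so Bernstein applies directly; and
\[
\sum_{i=1}^n E_i=2\sum_{j,t}p_{j,t}\mathbf D_{j,t}\varepsilon_{j,t}
\]
has \emph{deterministic} coefficients $p_{j,t}$, so Samson applied row by row gives a variance $\sum_{j,t}p_{j,t}^2\mathbf D_{j,t}^2\le(\mathfrak c_\Pi/dT)\|\mathbf D\|_{\mathcal F,\Pi}^2\le(\mathfrak c_\Pi/n)\|\mathbf D\|_{\mathcal F,\Pi}^2$ with no further integration needed. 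Reverse your conditioning and the delicate step disappears.
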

\noindent
Actually, from the proof of the theorem, we know $\mathfrak c_{\ref{explicit_risk_bound}} $ explicitly. Indeed,
\begin{displaymath}
\mathfrak c_{\ref{explicit_risk_bound}} =
72\mathfrak m_0\mathfrak m_{\mathbf\Lambda}\mathfrak c_{\xi}
+ 5\mathfrak c_{\ref{risk_bound},1}
+ 9\mathfrak m_0\mathfrak c_{\ref{risk_bound},2}
\end{displaymath}
where $\mathfrak c_{\ref{risk_bound},1}$ and $c_{\ref{risk_bound},2}$ are constants (explicitly given in Theorem~\ref{risk_bound} in Section~\ref{section_proofs}) depending themselves only on $\mathfrak m_0$, $\mathfrak v_{\xi}$, $ \mathfrak c_{\xi}$, $\mathfrak m_{\varepsilon}$, $\mathfrak m_{\bf\Lambda}$, $\Phi_{\varepsilon}$ and $\mathfrak c_{\Pi}$.
\\
\\
\textbf{Remarks:}
\begin{enumerate}
 \item Note that another classic way to formulate the risk bound in Theorem \ref{explicit_risk_bound} is that for every $s > 0$, with probability larger than $1 - e^{-s}$,
 \begin{displaymath}
 \|\widehat{\mathbf\Theta}_{k,\tau} -\mathbf\Theta^0\|_{\mathcal F,\Pi}^{2}
 \leqslant
 3\min_{\mathbf T\in\mathcal S_{k,\tau}}
 \|(\mathbf T -\mathbf T^0)\mathbf\Lambda\|_{\mathcal F,\Pi}^2
 +\overline{\mathfrak c}_{\ref{explicit_risk_bound}}\left[k(d +\tau)\frac{\log(n)}{n}
 +\frac{s}{n}\right].
 \end{displaymath}
 \item The $\phi$-mixing assumption (Assumption~\ref{assumption_phi_mixing}) is known to be restrictive, we refer the reader to~\cite{doukhan-mixing} where it is compared to other mixing conditions. Some examples are provided in Examples 7, 8 and 9 in~\cite{ALW13}, including stationary AR processes with a noise that has a density with respect to the Lebesgue measure on a compact interval. Interestingly,~\cite{ALW13} also discusses weaker notions of dependence. Under these conditions, we could here apply the inequalities used in~\cite{ALW13}, but it is important to note that this would prevent us from taking $\lambda$ of the order of $n$ in the proof of Proposition~\ref{preliminary_risk_bound}. In other words, this would deteriorate the rates of convergence. A complete study of all the possible dependence conditions on $\varepsilon$ goes beyond the scope of this paper.
\end{enumerate}
\subsection{Lower bound}
In the case where $T =\tau$ and $\mathbf\Lambda =\mathbf I_T$, the model in~\cite{KLT11} is included in our model, and corresponds to the case where the temporally dependent noise $\varepsilon$ is null: $\varepsilon = 0$. This means that the lower bound provided by Theorem 5 in~\cite{KLT11} holds in our setting. That is, when the $\xi_i$'s are $\mathcal N(0,1)$ and the $\mathbf X_i$'s are uniform (so $\mathfrak c_{\Pi} = 1$), there are absolute constants $\mathfrak c_{\inf},\beta > 0$ such that for any $k\leqslant\frac{n}{d\vee T}$,
\begin{displaymath}
\inf_{\widehat{\bf A}}
\sup_{\mathbf\Theta^0\in\mathcal M_{d,k,T}}
\mathbb P\left(\|\widehat{\bf A} -
\mathbf\Theta^0\|_{\mathcal F,\Pi}^{2}\geqslant
\mathfrak c_{\inf}\frac{k(d + T)}{n}\right)\geqslant 1 -\beta.
\end{displaymath}
In other words, the bound in Theorem~\ref{explicit_risk_bound} is tight, maybe up to the $\log(n)$ term (there is also a $\log$ term in the upper bounds of~\cite{KLT11}). We now extend this result to the case $\tau\leqslant T$, in the special case where the deterministic component of the series is $\tau$-periodic: ${\bf\Lambda} = ({\bf I}_{\tau}|\dots|{\bf I}_{\tau})$.
%


%
\begin{theorem}\label{lower_bound}
Assume the $\xi_i$'s are $\mathcal N(0,1)$, the $\mathbf X_i$'s are uniform (so $\mathfrak c_{\Pi} = 1$) and the temporally dependent noise $\varepsilon = 0$. There are absolute constants $\mathfrak c_{\inf},\beta > 0$ such that for any $\tau\in\{1,\dots,T\}$, in the case ${\bf\Lambda} = ({\bf I}_{\tau}|\dots|{\bf I}_{\tau})$, for any $k\leqslant (256\mathfrak m_{0}^{2}n/(d\vee\tau))^{1/3}$,
\begin{displaymath}
\inf_{\widehat{\bf A}}
\sup_{\mathbf\Theta^0\in\mathcal M_{d,k,\tau}}
\mathbb P\left(
\|\widehat{\bf A} -\mathbf\Theta^0{\bf\Lambda}\|_{\mathcal F,\Pi}^{2}
\geqslant\mathfrak c_{\inf}\frac{k(d +\tau)}{n}\right)\geqslant 1 -\beta.
\end{displaymath}
\end{theorem}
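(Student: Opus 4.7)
The plan is to reduce the stated lower bound to a standard minimax lower bound for uniform-design matrix completion of a $d\times\tau$ rank-$k$ matrix with i.i.d.\ $\mathcal N(0,1)$ noise, and then to prove that reduced bound via a Varshamov--Gilbert plus Fano argument that carefully respects the factorwise constraints defining $\mathcal M_{d,k,\tau}$.

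For the reduction step, I would first note that $\mathbf\Lambda\mathbf\Lambda^*=(T/\tau)\mathbf I_\tau$, so the map $\mathbf T\mapsto\mathbf T\mathbf\Lambda$ is injective and, for every $d\times\tau$ matrix $\mathbf R$, $\|\mathbf R\mathbf\Lambda\|_{\mathcal F,\Pi}^2=(d\tau)^{-1}\|\mathbf R\|_{\mathcal F}^2$ under uniform $\Pi$. Let $W:=\{\mathbf T\mathbf\Lambda:\mathbf T\in\mathcal M_{d,\tau}(\mathbb R)\}$ and $\pi_W$ denote the Fr\"ob\'enius orthogonal projector. Because $\mathbf\Theta^0\mathbf\Lambda\in W$, orthogonality yields $\|\pi_W(\widehat{\mathbf A})-\mathbf\Theta^0\mathbf\Lambda\|_{\mathcal F,\Pi}\leqslant\|\widehat{\mathbf A}-\mathbf\Theta^0\mathbf\Lambda\|_{\mathcal F,\Pi}$, so I may restrict attention to estimators of the form $\widehat{\mathbf A}=\widehat{\mathbf T}\mathbf\Lambda$. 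Writing $\mathbf X_i=e_{\mathbb R^d}(j_i)e_{\mathbb R^T}(t_i)^*$ and $\ell_i:=((t_i-1)\bmod\tau)+1$, the observations become $Y_i=\mathbf\Theta^0_{j_i,\ell_i}+\xi_i$ with $(j_i,\ell_i)$ uniform on $\{1,\ldots,d\}\times\{1,\ldots,\tau\}$ (using $T\in\tau\mathbb N^*$). The theorem is therefore equivalent to a lower bound of order $k(d+\tau)/n$ on $(d\tau)^{-1}\|\widehat{\mathbf T}-\mathbf\Theta^0\|_{\mathcal F}^2$.

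For the packing step, assume WLOG that $d\geqslant\tau$. Partition $\{1,\ldots,d\}$ into $k$ consecutive blocks of size $m=\lfloor d/k\rfloor$ and fix $\mathbf U\in\mathcal M_{d,k}(\mathbb R)$ whose $\ell$-th column equals $\sqrt{\mathfrak m_0/k}$ on block $\ell$ and vanishes elsewhere. For every sign matrix $\mathbf S\in\{-1,+1\}^{k\times\tau}$ and some parameter $\gamma\in(0,\sqrt{\mathfrak m_0/k}]$ to be chosen, I set $\mathbf\Theta^{\mathbf S}:=\gamma\mathbf U\mathbf S\in\mathcal M_{d,k,\tau}$ (admissible because $\mathfrak m_{\mathbf\Lambda}(\tau)=1$). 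Disjoint column supports of $\mathbf U$ give
\[
\|\mathbf\Theta^{\mathbf S}-\mathbf\Theta^{\mathbf S'}\|_{\mathcal F}^2 = \frac{4m\mathfrak m_0\gamma^2}{k}\,H(\mathbf S,\mathbf S'),
\]
with $H$ the Hamming distance. Varshamov--Gilbert furnishes $\mathcal C\subset\{-1,+1\}^{k\tau}$ of size $|\mathcal C|\geqslant 2^{k\tau/8}$ with pairwise $H\geqslant k\tau/8$. The $n$-fold KL between the observation laws is $(n/2d\tau)\|\mathbf\Theta^{\mathbf S}-\mathbf\Theta^{\mathbf S'}\|_{\mathcal F}^2\leqslant 2n\mathfrak m_0\gamma^2/k$, so Tsybakov's Fano-type inequality (e.g.\ Theorem~2.5 in \cite{TSYBAKOV09}) requires $\gamma^2\lesssim k^2\tau/(n\mathfrak m_0)$. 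Compatibility with the factorwise box $\gamma^2\leqslant\mathfrak m_0/k$ then holds precisely when $k^3\tau\lesssim n\mathfrak m_0^2$; choosing $\gamma^2$ at the Fano threshold makes the $\|\cdot\|_{\mathcal F,\Pi}^2$-separation $\gtrsim\mathfrak m_0\gamma^2/(2k)\gtrsim k\tau/n$ and delivers the probability lower bound with an absolute constant. Running the symmetric construction (fixing a block-structured $\mathbf V$ and varying the sign pattern of $\mathbf U$) produces the twin bound of order $kd/n$ under $k^3 d\lesssim n\mathfrak m_0^2$; combining the two gives order $k(d+\tau)/n$ under the joint cubic condition, and tracking the Varshamov--Gilbert and Fano constants converts this into $k\leqslant(256\mathfrak m_0^2 n/(d\vee\tau))^{1/3}$.

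I expect the bookkeeping of those absolute constants to be the only delicate part of the proof. The factorwise constraint in $\mathcal M_{d,k,\tau}$ is substantially tighter than the plain entrywise bound $\sup|\mathbf\Theta_{j,t}|\leqslant\mathfrak m_0$ used in the lower bound of \cite{KLT11}: rescaling only the sign factor by $\gamma$ forces $\gamma\leqslant\sqrt{\mathfrak m_0/k}$ rather than the more permissive $\gamma\leqslant\sqrt{\mathfrak m_0 k}$ implied by the entrywise bound, costing a factor $k$ in $\gamma^2$ and turning the classical linear feasibility condition $k\lesssim n/(d\vee T)$ of \cite{KLT11} into the cubic one stated here. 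Everything else is a direct application of standard tools (orthogonal projection, Varshamov--Gilbert, Fano's inequality).
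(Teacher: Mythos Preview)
Your argument is correct and shares the Varshamov--Gilbert plus Tsybakov-2.5 backbone with the paper, but the packing construction is genuinely different. You first reduce explicitly to a $d\times\tau$ uniform-sampling completion problem via the orthogonal projection onto ${\rm range}(\mathbf\Lambda)$ (the paper skips this and computes $\|\widetilde{\mathbf A}\mathbf\Lambda\|_{\mathcal F,\Pi}$ directly). For the packing, you take a block-indicator $\mathbf U$ and a sign matrix $\mathbf S$, so that $\gamma\mathbf U\mathbf S\in\mathcal M_{d,k,\tau}$ is immediate from $\gamma\leqslant\sqrt{\mathfrak m_0/k}$; the paper instead builds its packing out of $\{0,a\}$-valued matrices of size $(d\vee\tau)\times\overline k$ (with $\overline k=2^{\lfloor\log_2 k\rfloor}$), embeds them by block repetition into $\mathcal M_{d,\tau}$, and then needs a nontrivial Haar-type factorisation $\mathbf A=\mathbf B\mathbf C$ (via the orthogonal vectors $v[1],\dots,v[\overline k]$) to verify the factorwise box constraint, yielding $a\leqslant\mathfrak m_0/k$. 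Your construction is more elementary on this point but requires two runs (one per factor) to obtain both the $k\tau/n$ and $kd/n$ pieces, whereas the paper's single construction on the $(d\vee\tau)$ side delivers $k(d\vee\tau)/n$ in one shot. Both routes produce the same cubic feasibility condition for the same reason you identify: the factorwise bound $\sqrt{\mathfrak m_0/k}$ is tighter by a factor $\sqrt k$ than the entrywise bound in \cite{KLT11}. One small point to make explicit in a full write-up: your symmetric construction needs $k\leqslant\tau$ (to partition $\{1,\dots,\tau\}$ into $k$ blocks), which is harmless since membership in $\mathcal M_{d,k,\tau}$ with genuine rank $k$ already forces $k\leqslant d\wedge\tau$; similarly, Tsybakov's Theorem~2.5 asks for KL divergences from a fixed reference, so pick any $\mathbf S_0\in\mathcal C$ as that reference (the paper includes $0$ in its packing for this purpose).
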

\noindent
For $\tau=T$, we recover Theorem 5 in~\cite{KLT11}, but our result also guarantees that the bound in Theorem~\ref{explicit_risk_bound} is tight (up to $\log$ terms) even when $\tau < T$.
%


%
\section{Model selection}
\label{section_model_selection}
The purpose of this section is to provide a selection method of the parameter $k$. First, for the sake of readability, $\mathcal S_{k,\tau}$ and $\widehat{\mathbf T}_{k,\tau}$ are respectively denoted by $\mathcal S_k$ and $\widehat{\mathbf T}_k$ in the sequel. The adaptive estimator studied here is $\widehat{\mathbf\Theta} :=\widehat{\mathbf T}\mathbf\Lambda$, where $\widehat{\mathbf T} :=\widehat{\mathbf T}_{\widehat k}$,
\begin{displaymath}
\widehat k\in\arg\min_{k\in\mathcal K}
\{r_n(\widehat{\mathbf T}_k\mathbf\Lambda) +\textrm{pen}(k)\}
\textrm{ with }
\mathcal K =\{1,\dots,k^*\}\subset\mathbb N^*,
\end{displaymath}
and
\begin{displaymath}
\textrm{pen}(k) :=
16\mathfrak c_{{\rm pen}}
\frac{\log(n)}{n}k(d +\tau)
\textrm{ with }
\mathfrak c_{{\rm pen}} =
2\left(\frac{1}{\mathfrak c_{\ref{preliminary_risk_bound}}}\wedge\lambda^*\right)^{-1}.
\end{displaymath}
Note that the value of the constant $\mathfrak c_{\rm pen}$ could be deduced from the proofs. It would however depend on quantities that are unknown in practice, such as $\mathfrak c_\Pi$ or $\Phi_\varepsilon$. Moreover, the value of $\mathfrak c_{\rm pen}$ provided by the proofs would probably be too large for practical purposes. In practice, we recommend to use the slope heuristics to estimate this constant. The slope heuristic is defined as follows: for each $C > 0$, let us define
\begin{displaymath}
k(C)\in\arg\min_{k\in\mathcal K}
\{r_n(\widehat{\mathbf T}_k\mathbf\Lambda) + C\cdot k\}.
\end{displaymath}
Then, let us define $\widetilde{C}$ as the location of the largest jump of the function
\begin{displaymath}
C\longmapsto r_n(\widehat{\mathbf T}_{k(C)}\mathbf\Lambda)
\end{displaymath}
and choose the rank $\widetilde k = k(2C)$. This popular procedure leads to good practical results in most situations. Its theoretical properties are available only in limited situations (see~\cite{ARLOT19}), though, so we will focus our theoretical result to $\widehat k$.
%


%
\begin{theorem}\label{risk_bound_adaptive_estimator}
Under Assumptions \ref{assumption_phi_mixing}, \ref{assumption_boundedness} and \ref{assumption_probabilities}, if $n\geqslant \max(d,\tau)$, then
\begin{eqnarray*}
 \|\widehat{\mathbf\Theta} -\mathbf\Theta^0\|_{\mathcal F,\Pi}^{2}
 & \leqslant &
 4\min_{k\in\mathcal K}\left\{
 3\min_{\mathbf T\in\mathcal S_k}
 \|(\mathbf T -\mathbf T^0)\mathbf\Lambda\|_{\mathcal F,\Pi}^2 +
 \mathfrak c_{\ref{risk_bound_adaptive_estimator},1}k(d +\tau)\frac{\log(n)}{n}
 \right\}\\
 & &
 \hspace{6cm}
 +\frac{\mathfrak c_{\ref{risk_bound_adaptive_estimator},1}}{n}\log\left(\frac{4k^*}{\alpha}\right)
 +\frac{\mathfrak c_{\ref{risk_bound_adaptive_estimator},2}}{n}
\end{eqnarray*}
with probability larger than $1 -\alpha$, where
\begin{displaymath}
\mathfrak c_{\ref{risk_bound_adaptive_estimator},1} =
4\mathfrak c_{\ref{explicit_risk_bound}} + 16\mathfrak c_{{\rm pen}}
+ 72\mathfrak m_0\mathfrak c_{\xi}
\textrm{ and }
\mathfrak c_{\ref{risk_bound_adaptive_estimator},2} =
9\mathfrak c_{\ref{risk_bound},2}\mathfrak m_0.
\end{displaymath}
\end{theorem}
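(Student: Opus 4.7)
The plan is to follow Massart's classical model-selection blueprint. Starting from the defining property of $\widehat k$, namely that for every $k\in\mathcal K$ and every $\mathbf T_k\in\mathcal S_k$
\[r_n(\widehat{\mathbf\Theta})+\textrm{pen}(\widehat k)\leq r_n(\widehat{\mathbf T}_k\mathbf\Lambda)+\textrm{pen}(k),\]
I would expand $r_n$ around $\mathbf\Theta^0$ using $Y_i=\langle\mathbf X_i,\mathbf\Theta^0\rangle_{\mathcal F}+\overline\xi_i$. The common terms $n^{-1}\sum_i\overline\xi_i^{2}$ cancel, leaving
\[\|\widehat{\mathbf\Theta}-\mathbf\Theta^0\|_n^{2}\leq\|\widehat{\mathbf T}_k\mathbf\Lambda-\mathbf\Theta^0\|_n^{2}+2Z_n(\widehat{\mathbf\Theta}-\widehat{\mathbf T}_k\mathbf\Lambda)+\textrm{pen}(k)-\textrm{pen}(\widehat k),\]
with $Z_n(\mathbf B):=n^{-1}\sum_i\overline\xi_i\langle\mathbf X_i,\mathbf B\rangle_{\mathcal F}$ and $\|\cdot\|_n$ the empirical norm. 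Splitting the argument of $Z_n$ as $(\widehat{\mathbf\Theta}-\mathbf\Theta^0)-(\widehat{\mathbf T}_k\mathbf\Lambda-\mathbf\Theta^0)$ reduces the problem to bounding $|Z_n|$ uniformly over each rank class $\mathcal S_j\mathbf\Lambda-\mathbf\Theta^0$, $j\in\mathcal K$, and over $\mathcal S_k$.

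For this control I would reuse, class by class, the peeling and Samson-inequality argument that already drives Theorem~\ref{explicit_risk_bound} (that is, Proposition~\ref{preliminary_risk_bound} and Theorem~\ref{risk_bound} of Section~\ref{section_proofs}). On a fixed rank-$j$ class, it delivers the slice-wise deviation bound
\[2|Z_n(\mathbf B)|\leq\tfrac14\|\mathbf B\|_{\mathcal F,\Pi}^{2}+8\mathfrak c_{\textrm{pen}}j(d+\tau)\frac{\log n}{n}+\frac{C}{n}\log\frac{4k^*}{\alpha},\]
together with a companion comparison $\|\cdot\|_n^{2}\asymp\|\cdot\|_{\mathcal F,\Pi}^{2}$ that is uniform over rank-bounded matrices. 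A union bound over $j\in\mathcal K$ turns the $\log(4/\alpha)$ of Theorem~\ref{explicit_risk_bound} into the $\log(4k^*/\alpha)$ appearing in the statement.

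Assembling the pieces, I would pick $\mathbf T_k$ as an approximate minimizer of $\|(\mathbf T-\mathbf T^0)\mathbf\Lambda\|_{\mathcal F,\Pi}^{2}$ over $\mathcal S_k$, push the $\tfrac14\|\widehat{\mathbf\Theta}-\mathbf\Theta^0\|_{\mathcal F,\Pi}^{2}$ term from $Z_n(\widehat{\mathbf\Theta}-\mathbf\Theta^0)$ to the left-hand side (which explains the prefactor $4$ in front of the oracle minimum), and observe that the random complexity term $8\mathfrak c_{\textrm{pen}}\widehat k(d+\tau)\log(n)/n$ equals exactly $\tfrac12\textrm{pen}(\widehat k)$. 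It is therefore absorbed by $\textrm{pen}(\widehat k)$, and the penalty difference $\textrm{pen}(k)-\textrm{pen}(\widehat k)$ collapses into $\textrm{pen}(k)=16\mathfrak c_{\textrm{pen}}k(d+\tau)\log(n)/n$ on the right. A symmetric handling of the bias term, using the norm comparison and the rank-$k$ version of Theorem~\ref{explicit_risk_bound}, yields the prefactor $\mathfrak c_{\ref{risk_bound_adaptive_estimator},1}$ and the additive term $\mathfrak c_{\ref{risk_bound_adaptive_estimator},2}/n$. Minimizing over $k\in\mathcal K$ then gives the claimed oracle inequality.

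The main technical obstacle is the empirical-process step. The penalty constant $\mathfrak c_{\textrm{pen}}=2(\mathfrak c_{\ref{preliminary_risk_bound}}^{-1}\wedge\lambda^*)^{-1}$ has been calibrated so that the Samson-based peeling of Theorem~\ref{risk_bound} produces precisely the coefficient $8\mathfrak c_{\textrm{pen}}$ and the deviation is exactly half of $\textrm{pen}(k)$. Tracking this constant faithfully while inflating the confidence level from $\log(4/\alpha)$ to $\log(4k^*/\alpha)$ without losing a factor of $k^*$ in the variance term is where the work lies; once that balance is secured, the remainder of the argument is essentially bookkeeping.
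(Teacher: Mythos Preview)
Your plan follows the classical Massart decomposition into a linear noise process $Z_n$ plus an empirical-norm term, but this is \emph{not} how the paper proceeds, and the tools you plan to reuse do not deliver the ingredients you describe.

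The paper never isolates $Z_n$ from $\|\cdot\|_n$, and there is no peeling argument anywhere. Proposition~\ref{preliminary_risk_bound} gives an \emph{integrated} exponential inequality for the excess risk: on a high-probability event one has, for every $\mathbf T$ in an $\epsilon$-net,
\[(1-\mathfrak c_{\ref{preliminary_risk_bound}}\lambda/n)\|(\mathbf T-\mathbf T^0)\mathbf\Lambda\|_{\mathcal F,\Pi}^{2}\leq r_n(\mathbf T\mathbf\Lambda)-r_n(\mathbf T^0\mathbf\Lambda)+4x\]
and the reverse inequality with factor $(1+\mathfrak c_{\ref{preliminary_risk_bound}}\lambda/n)$. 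The linear term and the norm comparison are already packaged together; the model-selection proof exploits this directly. The key calibration is $\lambda=n/\mathfrak c_{\rm pen}$ and $x_{k,\epsilon}=\lambda^{-1}\log(2\alpha^{-1}|\mathcal K|\,|\mathcal S_k^{\epsilon}|)$, chosen so that $4x_{k,\epsilon}-\textrm{pen}(k)\leq (4\mathfrak c_{\rm pen}/n)\log(2|\mathcal K|/\alpha)$ uniformly in $k$; this is precisely what lets $\textrm{pen}(\widehat k)$ absorb the complexity of the random class. One then chains $\|\widehat{\mathbf\Theta}-\mathbf\Theta^0\|_{\mathcal F,\Pi}^{2}$ through the $\epsilon$-net approximant $\widehat{\mathbf T}_{\widehat k}^{\epsilon}$, applies the $\Omega^{-}$ bound, then the definition of $\widehat k$, then the $\Omega^{+}$ bound, and finally inserts the Theorem~\ref{explicit_risk_bound} bound for $\|(\widehat{\mathbf T}_k-\mathbf T^0)\mathbf\Lambda\|_{\mathcal F,\Pi}^{2}$, which holds on the same event.

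Your route could in principle be made to work, but it needs two pieces the paper does not supply: a uniform bound on $Z_n$ over rank classes (recall $\overline\xi_i=\langle\mathbf X_i,\varepsilon\rangle_{\mathcal F}+\xi_i$, so the $\overline\xi_i$'s share the same $\varepsilon$ matrix and are not independent, which blocks any off-the-shelf linear-process bound), and a separate uniform comparison $\|\cdot\|_n^{2}\asymp\|\cdot\|_{\mathcal F,\Pi}^{2}$ over low-rank matrices. Neither follows from Proposition~\ref{preliminary_risk_bound} or Theorem~\ref{risk_bound} as stated; those results deliver the excess-risk sandwich above, not a slice-wise bound of the form $2|Z_n(\mathbf B)|\leq\tfrac14\|\mathbf B\|_{\mathcal F,\Pi}^{2}+\dots$. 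The paper's integrated approach sidesteps both issues by never disentangling them.
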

%


%
\section{Numerical experiments}\label{section_experiments}
This section describes an experimental study of the estimator of the matrix $\mathbf T^0$ introduced at Section \ref{section_setting}. In particular, we compare on simulated periodic data the completion procedure using the periodicity information, to the standard procedure, and we observe a clear improvement. We also illustrate our results on real data from Paris sharing bike system.\\
In the case where no particular temporal structure is used, that is, ${\bf \Lambda}={\bf I}_T$, standard packages such as softImpute~\cite{softImpute} could be used. However, this is not necessarily the case for a general $\Lambda$, thus we implemented a standard alternate least square (ALS) procedure. That is, we iterate ${\bf U} := \arg\min_{U} {\bf r_n}(U {\bf V} {\bf \Lambda}) $ and ${\bf V} := \arg\min_{V} {\bf r_n}({\bf U} V {\bf \Lambda})$ until convergence. Each step is a linear regression and has an explicit solution. Despite its extreme simplicity, this type of alternate optimization is known to lead to very good results in practice~\cite{ALS}, and such a method is actually used by softImpute~\cite{softImpute}. The code of all the experiments can be found on the third author webpage \url{https://amelierosier8.wixsite.com/website}.
%


%
\subsection{Experiments on simulated datas}
The experiments in this subsection are done on datas simulated the following way:
\begin{enumerate}
 \item We generate a matrix $\mathbf T^0 =\mathbf U^0\mathbf V^0$ with $\mathbf U^0\in\mathcal M_{d,k}(\mathbb R)$ and $\mathbf V^0\in\mathcal M_{k,\tau}(\mathbb R)$. Each entries of $\mathbf U^0$ and $\mathbf V^0$ are generated independently by simulating i.i.d. $\mathcal N(0,1)$ random variables.
 \item We multiply $\mathbf T^0$ by a known matrix $\mathbf{\Lambda}\in\mathcal M_{\tau,T}(\mathbb R)$. This matrix depends on the time series structure assumed on $\mathbf M$. Here, we consider the periodic case: $T = p\tau$, $p\in\mathbb N^*$ and $\mathbf\Lambda = (\mathbf I_{\tau}|\dots|\mathbf I_{\tau})$.
 \item The matrix $\mathbf M$ is then obtained by adding a matrix $\varepsilon$ such that $\varepsilon_{1,.},\dots,\varepsilon_{d,.}$ are generated independently by simulating i.i.d. AR(1) processes with compactly supported error in order to meet the $\phi$-\textit{mixing} condition. We multiply $\varepsilon$ by the coefficient $\sigma_{\varepsilon}$ which value will vary according to the experiments. The goal is to evaluate the impact of adding more noise in the estimation.\\
Only 30\% of the entries of $\mathbf M$, taken randomly, are observed. These entries are then corrupted by i.i.d. observation errors $\xi_1,\dots,\xi_n\rightsquigarrow\mathcal N(0,0.01^2)$. To meet Assumption \ref{assumption_boundedness}, we also consider uniform errors $\xi_1,\dots,\xi_n\rightsquigarrow\mathcal U([-a,a])$, where $a =\sqrt 3/100\approx 0.017$ to keep the same variance than previously. The first experiments will show that the estimation remains quite good even if the $\xi_i$'s are not bounded.
\end{enumerate}
\medskip
\noindent
Given the observed entries, our goal is to complete the missing values of the matrix and check if they correspond to the simulated data in two different cases:
\begin{enumerate}
	\item Our first model doesn't take into account the time series structure in the matrix $\mathbf{M}$. Thus, we simply apply our fonction \texttt{als} to the dataframe containing the values of the noisy entries in addition to their position in the matrix $\mathbf M$ (number of the line $j$ and number of the column $t$ with $1\leqslant j \leqslant d$, $1\leqslant t \leqslant T$). The output of the function gives directly an estimator of the matrix $\mathbf \Theta^0$.
	\item Our second model takes into account the time series structure in $\mathbf{M}$ and more precisely the periodicity of the time series datas. In order to have an estimator of the matrix $\mathbf \Theta^0$, some transformation are required on the data: the fonction \texttt{als} is now applied to the dataframe in which all the observed entries at the position $(j,t)$ ($1\leqslant j \leqslant d$, $1\leqslant t \leqslant T$) are now moved to the position $\mathbf(j, t [\rm mod] \tau)$. The output of this function needs to be remultiplied by $\mathbf\Lambda$ to have an estimator of $\mathbf \Theta^0$.
\end{enumerate}

We will evaluate the MSE of the estimator with respect to several parameters and show that there is a gain to take into account the time series structure in the model. As expected, the more $\mathbf\Theta^0$ is perturbed, either with $\varepsilon$ or $\xi_1,\dots,\xi_n$, the more difficult it is to reconstruct the matrix. In the same way, increasing the value of the rank $k$ will lead to a worse estimation. Finally, we study the effect of replacing the uniform error in each AR(1) by a Gaussian one.
\\
\\
The first experiments are done with $d = 1000$, $T=100$ and $\tau=25$ to be in concordance with the experiments on real data (see subsection 5.3). Here are the MSE obtained for both models, 3 values of the rank $k$ and for two kinds of observation errors $\xi_1,\dots,\xi_n$: Gaussian $\mathcal N(0,0.01^2)$ v.s. uniform $\mathcal U([-0.017,0.017])$. The errors in the AR(1) processes generating the rows of $\varepsilon$ remain uniform $\mathcal U([-1,1])$.
\begin{table}[!h]
\begin{center}
\begin{tabular}{|c|c|c|}
\hline
\hline
MSE & \textbf{$\xi_i\rightsquigarrow\mathcal N(0,0.01^2)$} & \textbf{$\xi_i\rightsquigarrow\mathcal U([-0.017,0.017])$}\\
\hline
Model w/o time series struct. & 0.00012 & 0.00014 \\
\hline
Model with time series struct. & 0.00009 &  0.00010 \\
\hline
\hline
\end{tabular}
\medskip
\caption{MSE for both models, $k = 2$.}\label{rank_2}
\end{center}
\end{table}
\begin{table}[!h]
\begin{center}
\begin{tabular}{|c|c|c|}
\hline
\hline
MSE & \textbf{$\xi_i\rightsquigarrow\mathcal N(0,0.01^2)$} & \textbf{$\xi_i\rightsquigarrow\mathcal U([-0.017,0.017])$}\\
\hline
Model w/o time series struct. & 0.00018 & 0.00022 \\
\hline
Model with time series struct. & 0.00012 & 0.00013 \\
\hline
\hline
\end{tabular}
\medskip
\caption{MSE for both models, $k = 5$.}\label{rank_5}
\end{center}
\end{table}
\begin{table}[!htbp]
\begin{center}
\begin{tabular}{|c|c|c|}
\hline
\hline
MSE & \textbf{$\xi_i\rightsquigarrow\mathcal N(0,0.01^2)$} & \textbf{$\xi_i\rightsquigarrow\mathcal U([-0.017,0.017])$}\\
\hline
Model w/o time series struct. & 0.00026 & 0.00045 \\
\hline
Model with time series struct. & 0.00013 & 0.00017 \\
\hline
\hline
\end{tabular}
\medskip
\caption{MSE for both models, $k = 8$.}\label{rank_8}
\end{center}
\end{table}
\newline
Thus, both of the rank $k$ and the nature of the error considered for the $\xi_i$'s seem to play a key role on the reduction of the MSE. Regarding the rank $k$ ($d$, $T$ and $\tau$ being fixed) being fixed), our numerical results are consistent with respect to the theoretical rate of convergence of order $O(k(d+\tau)\log(n)/n)$ obtained at Theorem \ref{explicit_risk_bound} when we consider the time series structure of the data (see Tables \ref{rank_2}, \ref{rank_5} and \ref{rank_8}). Indeed, for both models, the MSE is increasing when the value of the rank $k$ is higher but this increase is always more significant in the model without time series structure, which is also consistent  with the rate of convergence of order $O(k(d + T)\log(n)/n)$ obtained in this case. Note that when we look at one model at a time, for each tested values of $k$, whatever the distribution of the errors $\xi_1,\dots,\xi_n$ (Gaussian or uniform), the MSE remains of same order with a slight improvement when we considered Gaussian errors. This justifies to take $\xi_1,\dots,\xi_n\rightsquigarrow\mathcal N(0,0.01^2)$ in the following experiments.
\\
\\
This study can be summarized in the following experiment which shows the evolution of the MSE with respect to the rank $k$ ($k = 1,\dots,10$) for both models. Once again, we take $d = 1000$, $T = 100$, $\tau=25$ but the $\xi_i$'s remain i.i.d. $\mathcal N(0,0.01^2)$ random variables, and $\varepsilon_{1,.},\dots,\varepsilon_{d,.}$ are i.i.d. AR(1) processes with Gaussian errors.
\begin{figure}[!h]
\begin{center} 
\includegraphics[scale=0.45]{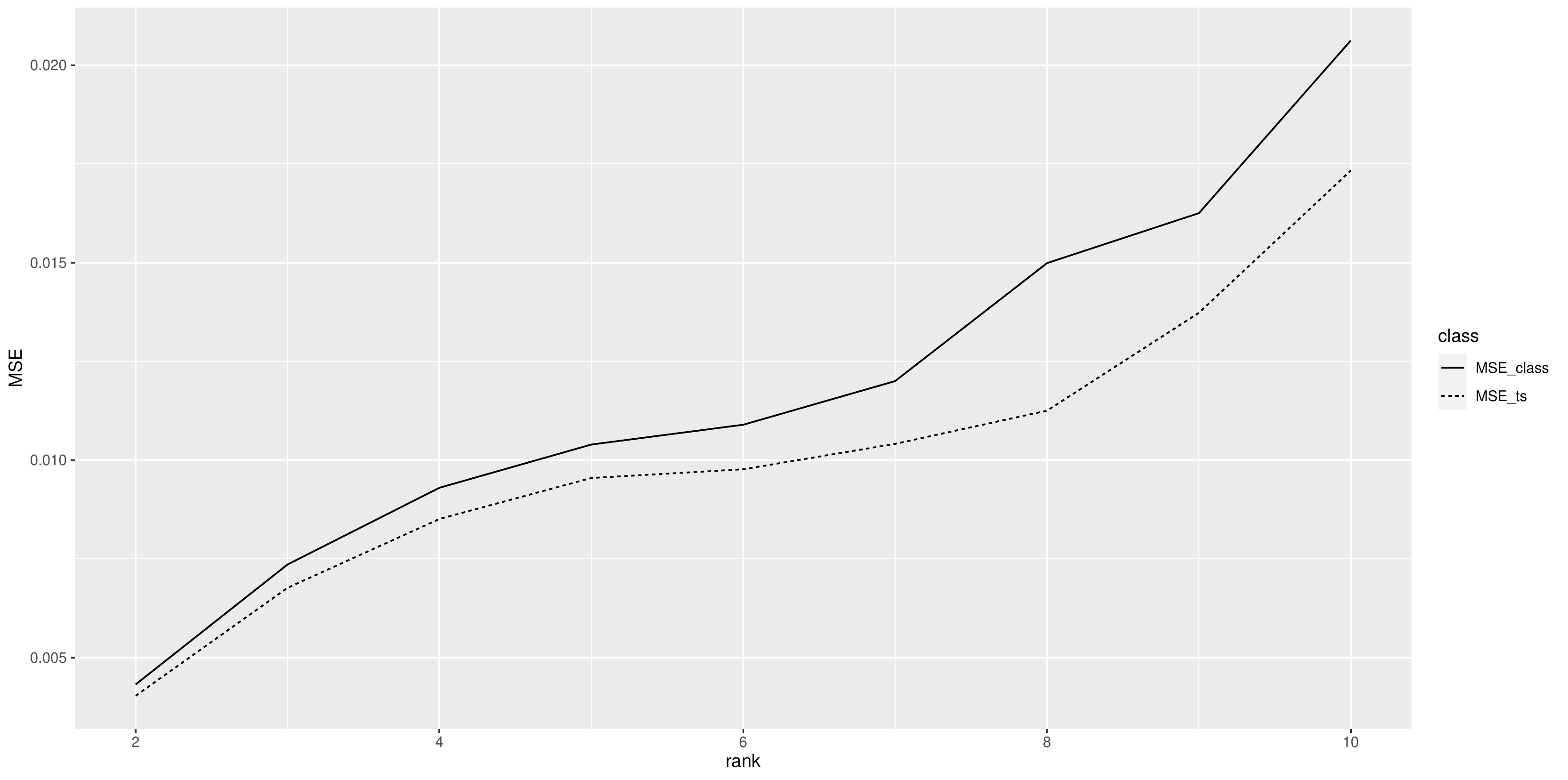}
\caption{Models (time series (dotted line) v.s. classic (solid line)) MSEs with respect to the rank $k$.}\label{figure_rg_MSE}
\end{center}
\end{figure}
\newline
As expected (see Figure \ref{figure_rg_MSE}), the MSE is much better with the model taking into account the time series structure. The MSE in both cases degrades when the value of the rank is increasing, the maximum being reached for $k=10$ with the value 0.0173 for the time series model compared to 0.0206 in the classic case, which still remains very low.
\newline
\newline
As we said, the estimation seems to be more precise with Gaussian errors in $\varepsilon$, and the more $\mathbf\Theta^0$ is perturbed via $\varepsilon$ or $\xi_1,\dots,\xi_n$, the more the completion process is complicated and the MSE degrades. So, we now evaluate the consequence on the MSE of changing the value of $\sigma_{\varepsilon}$. For both models (with or without taking into account the time series structure), the following figure shows the evolution of the MSE with respect to $\sigma_{\varepsilon}$ when the errors in $\varepsilon$ are $\mathcal N(0,1/3)$ random variables and all the other parameters remain the same than previously, we are still considering 30\% of observed entries. 
\begin{figure}[!h]
\begin{center} 
\includegraphics[scale=0.45]{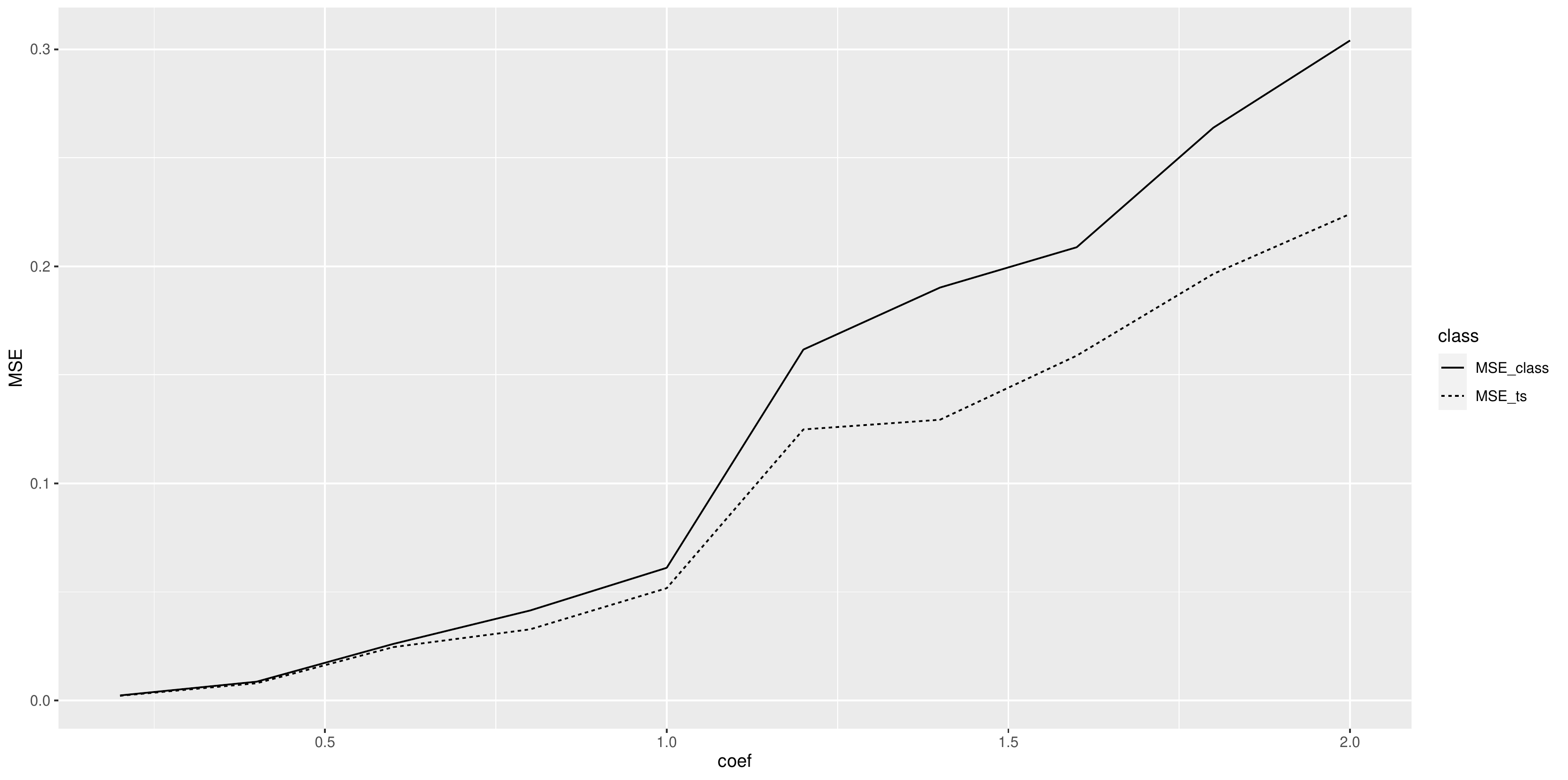}
\caption{Models (time series (dotted line) v.s. classic (solid line)) MSEs with respect to $\sigma_{\varepsilon}$, Gaussian errors.}\label{figure_coef_MSE_1}
\end{center}
\end{figure}
\newline
Once again, as expected (see Figure \ref{figure_coef_MSE_1}), the MSE with time series model is smaller than the one with the classic model for each values of $\sigma_{\varepsilon}$. The fact that the MSE increases with respect to $\sigma_{\varepsilon}$ with both models illustrates that \textit{more noise} always complicates the completion process. In our experiments, the values of $\sigma_{\varepsilon}$ range from 0.02 to 2. We can notice that, the more we add noise with $\sigma_{\varepsilon}$, the more significant the gap between the MSE of both models is. With $\sigma_\varepsilon$ equal to 2, the MSE reaches the value 0.2241 for the time series model and 0.3040 for the classic one. Our method has increasing difficulty in reconstructing the matrix when we add too much noise to the model. See also Table \ref{min_max_normal}.
\begin{table}[!htbp]
\begin{center}
\begin{tabular}{|c|c|c|}
\hline
\hline
 & {\bf Min. MSE} & {\bf Max. MSE}\\
\hline
Model w/o time series struct. & 0.0023 & 0.3040\\
\hline
Model with time series struct. & 0.0021 & 0.2241\\
\hline
\hline
\end{tabular}
\medskip
\caption{Min. and max. values reached by the MSE with Gaussian errors in $\varepsilon$.\label{min_max_normal}}
\end{center}
\end{table}
\newline
Let us do the same experiment but with uniform $\mathcal U([-1,1])$ errors in the AR(1) processes generating the rows of $\varepsilon$.
\medskip
\begin{figure}[!h]
\begin{center} 
\includegraphics[scale=0.45]{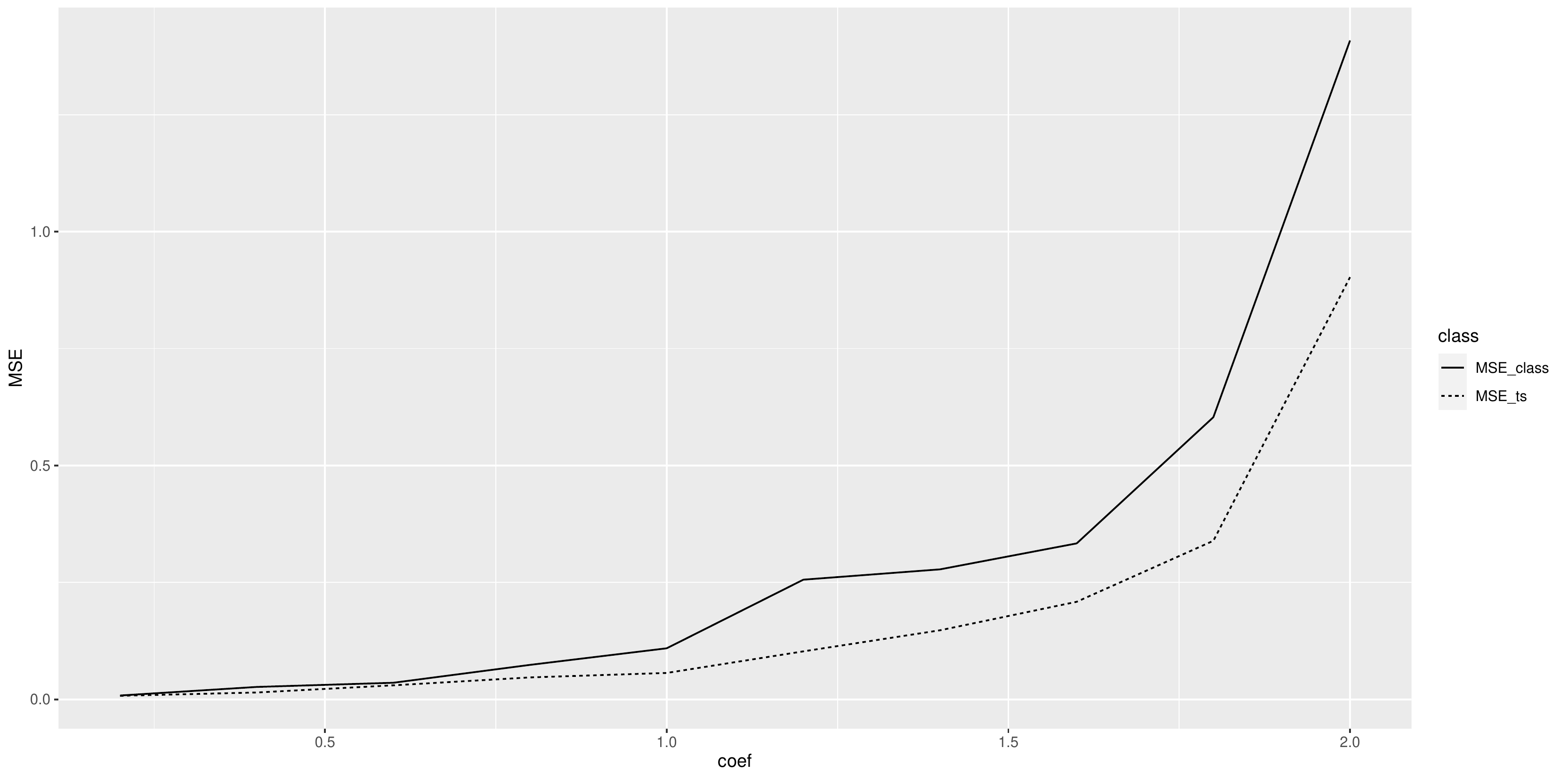}
\caption{Models (time series (dotted line) v.s. classic (solid line)) MSEs with respect to $\sigma_{\varepsilon}$, uniform errors.}\label{figure_coef_MSE_2}
\end{center}
\end{figure}
\newline
The curves shape on Figure \ref{figure_coef_MSE_2} is pretty much the same as in the previous graph: the MSE for the model taking into account the time series structure is still smaller than for the classic model and this difference between the two models is even greater when we increase the value of $\sigma_{\varepsilon}$. However, this time, the MSE for both models reaches higher values, leading to a huge misestimation when $\sigma_{\varepsilon}=2$ (see Table \ref{min_max_uniform}).
\begin{table}[!htbp]
\begin{center}
\begin{tabular}{|c|c|c|}
\hline
\hline
 & {\bf Min. MSE} & {\bf Max. MSE}\\
\hline
Model w/o time series struct. & 0.0082 & 1.4088\\
\hline
Model with time series struct. & 0.0076 & 0.9027\\
\hline
\hline
\end{tabular}
\medskip
\caption{Min. and max. values reached by the MSE with uniform errors in $\varepsilon$.}\label{min_max_uniform}
\end{center}
\end{table}
\newline
\newline
Finally, as mentioned, the previous numerical experiments were done by assuming that $k$ is known, which is mostly uncommon in practice. So, our purpose in the last part of this section is to implement the model selection method introduced at Section \ref{section_model_selection}. Let us recall the criterion to minimize:
\begin{displaymath}
\left\{
\begin{array}{rcl}
 \textrm{crit}(k) & = &
 r_n(\widehat{\mathbf T}_k\mathbf\Lambda) +\textrm{pen}(k)\\
 \textrm{pen}(k) & = &
 \mathfrak c_{\rm cal}
 k(d +\tau)\log(n)/n
\end{array}\right.
\textrm{$;$ }
k \in\{1,\dots,20\}.
\end{displaymath}
In the sequel, $\xi_1,\dots,\xi_n\rightsquigarrow\mathcal N(0,0.5)$, $\varepsilon_{1,.},\dots,\varepsilon_{d,.}$ are i.i.d. AR(1) processes with $\mathcal N(0,1/3)$ errors, and $\sigma_{\varepsilon} = 0.2$. Percentage of observed entries is still 30\%. The penalty term in ${\rm crit}(.)$ depends on the constant $\mathfrak c_{\rm cal} > 0$ which is calibrated here by using the slope heuristic presented at Section \ref{section_model_selection}. \\
On 20 independent experiments, Table \ref{table_k} gives the mean MSE obtained for the estimator computed with the true rank $k=5$ and the associated adaptative estimator computed with $\widehat k$ selected by minimizing the criterion studied in Section \ref{section_model_selection}.
\begin{table}[!htbp]
\begin{center}
\begin{tabular}{|c|c|}
\hline
\hline
{\bf Mean MSE for $\widehat{\mathbf T}_k\mathbf\Lambda$} &  0.10712 \\
\hline
\textbf{Mean MSE for $\widehat{\mathbf T}_{\widehat k}\mathbf\Lambda$} & 0.17601 \\
\hline
\hline
\end{tabular}
\medskip
\caption{Mean MSE over 20 simulations for $\widehat{\mathbf T}_k\mathbf\Lambda$ and $\widehat{\mathbf T}_{\widehat k}\mathbf\Lambda$.}\label{table_k}
\end{center}
\end{table}
Table \ref{frequence} gives the frequence of the different values of $k$ selected. Our method select the true $k$ 8 times over 20.
\begin{table}[!htbp]
\begin{center}
\begin{tabular}{|c|c|c|c|c|c|c|}
\hline
\hline
\textbf{k selected} & 4 & 5 & 6 & 7 & 8 & 9 \\
\hline
\textbf{Frequence} & 0.05 & 0.4 & 0.1 & 0.15 & 0.2 & 0.1 \\
\hline
\hline
\end{tabular}
\medskip
\caption{Frequence of k-values selected}\label{frequence}
\end{center}
\end{table}

%
%

%
\subsection{Experiments on real datas}
Modern transportation data are often high-dimensional and have strong patterns including periodicity. For this reason, matrix factorization methods are very popular in this field~\cite{C19,T+18}.
The data used in this section comes from the \texttt{funFEM} package (the real time data are available at \url{https://developer.jcdecaux.com/}). We used the \textit{Velib} data set which contains data from the bike sharing system of Paris. These data provide the occupancy (number of available bikes/number of bike docks) of 1189 bike stations over one week. The data were collected every hour during the following period: Sunday 1st Sept. - Sunday 7th Sept., 2014. We removed the time points collected during the week-end (50 time points in total) insofar as the week-end occupancy of the bike stations differs from the week. Loading profiles of 6 different stations (week-end excluded) are represented on Figure \ref{bike_stations_examples}.

\medskip
\begin{figure}[!h]
\begin{center} 
\includegraphics[scale=1]{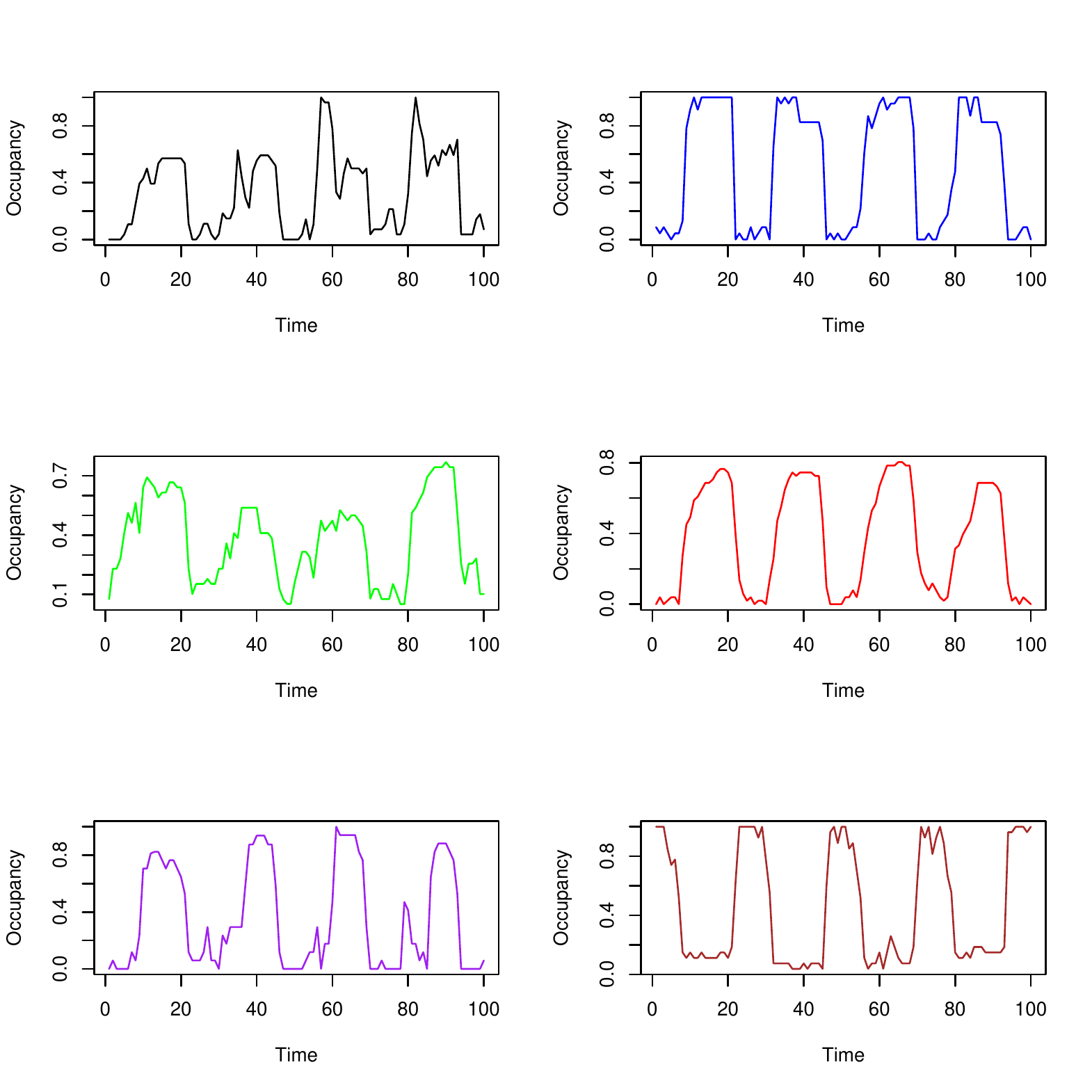}
\caption{Occupancy of six \textit{Velib} stations over one week (week-end excluded).}\label{bike_stations_examples}
\end{center}
\end{figure}

We clearly notice the daily periodic behaviour of our time series. Thus, the experiments of this section are done with the real time data in the matrix $\mathbf M$ of dimensions $d=1189$, $\tau=25$ (which corresponds to one day) and $T=125$ (four days, from Monday to Thursday). Once again, we evaluate the MSE of the estimator with and without taking into account the time series structure, that is the periodicity in this case. Different percentages of the entries observed are tested. As for the simulated data, for the model without considering the temporal structure of our series, we apply directly our function \texttt{als} on the dataframe containing the observed entries with their position in the matrix, without any additional transformation on the data. The output gives directly an estimator of $\mathbf M$. As regards the model considering the periodic behaviour of the \textit{Velib} time series in $\mathbf M$, the ALS optimization procedure is applied on the dataframe which has received the same transformation  than the one explained at point (2) in  the previous section. Once again, the output needs to be multiplied by $\mathbf\Lambda$ to have an estimator of $\mathbf M$ at the end. The MSEs obtained for both models are gathered in Table \ref{mean_MSE_real_data}. We study how the MSEs vary according to the percentage of observed entries.
\newline

\begin{table}[!htbp]
\begin{center}
\begin{tabular}{|c|c|c|c|}
\hline
\hline
 & 15\% & 30\% \\
\hline
\textbf{Model w/o time series struct.} & 0.0609 & 0.0315 \\
\hline
\textbf{Model with time series struct.} & 0.0436 & 0.0381 \\
\hline
\hline
\end{tabular}
\medskip
\caption{MSE according to the number of observed entries (\%). }\label{mean_MSE_real_data}
\end{center}
\end{table}
Of course, the real data is not {\it exactly} periodic (as can bee seen in some of the series in Figure~\ref{bike_stations_examples}. This means that the bias term of in Theorem~\ref{explicit_risk_bound}, is larger for the method imposing periodicity than for the standard method: $\min_{{\bf T}\in \mathcal{S}_{k,\tau}} \|({\bf T}-{\bf T}^0){\bf \Lambda}\|_{\mathcal{F},\Pi} \geq \min_{{\bf T}\in \mathcal{S}_{k,T}} \|{\bf T}-{\bf T}^0\|_{\mathcal{F},\Pi}$. On the other hand, the variance term of the method using periodicity is much smaller: $k(d+\tau)/n \leq k(d+T)/n$. Thus, it is expected that when the sample size $n$ is small, using periodicity can improve on the standard method, but that this is not the case for larger values of $n$. This is perfectly illustrated by our experiments: Table~\ref{mean_MSE_real_data} show that when we observe 15\% of the original data, exploiting periodicity improves on the reconstruction of the data by the standard method by more than 25\%. On the other hand, when we the sample size doubles, the standard method already performs slightly better.
%


%
\section{Proofs}
\label{section_proofs}
This section is organized as follows. We first state an exponential inequality that will serve as a basis for all the proofs. From this inequality, we prove Theorem~\ref{risk_bound}, a prototype of Theorem~\ref{explicit_risk_bound} that holds when the set $\mathcal S_{k,\tau}$ is finite or infinite but compact by using $\epsilon$-nets ($\epsilon > 0$). In the proof of Theorem~\ref{explicit_risk_bound}, we provide an explicit risk-bound by using the $\epsilon$-net $\mathcal S_{k,\tau}^{\epsilon}$ of $\mathcal S_{k,\tau}$ constructed in Cand\`es and Plan \cite{CP11}, Lemma 3.1.
%


%
\subsection{Exponential inequality}
This sections deals with the proof of the following exponential inequality, the cornerstone of the paper, which is derived from the usual Bernstein inequality and its extension to $\phi$-mixing processes due to Samson \cite{SAMSON00}.
\begin{proposition}\label{preliminary_risk_bound}
Let $\mathbf T \in S_{k,\tau}$.
Under Assumptions \ref{assumption_phi_mixing}, \ref{assumption_boundedness} and \ref{assumption_probabilities},
\begin{equation}\label{preliminary_risk_bound_1}
\mathbb E\left[\exp\left(
\frac{\lambda}{4}\left(\left(1 +\mathfrak c_{\ref{preliminary_risk_bound}}\frac{\lambda}{n}\right)
(R(\mathbf T^0\mathbf\Lambda) - R(\mathbf T\mathbf\Lambda))
+ r_n(\mathbf T\mathbf\Lambda) - r_n(\mathbf T^0\mathbf\Lambda)\right)\right)\right]\leqslant 1
\end{equation}
and
\begin{equation}\label{preliminary_risk_bound_2}
\mathbb E\left[\exp\left(
\frac{\lambda}{4}\left(\left(1 -\mathfrak c_{\ref{preliminary_risk_bound}}\frac{\lambda}{n}\right)
(R(\mathbf T\mathbf\Lambda) - R(\mathbf T^0\mathbf\Lambda))
+ r_n(\mathbf T^0\mathbf\Lambda) - r_n(\mathbf T\mathbf\Lambda)\right)\right)\right]\leqslant 1
\end{equation}
for every $\mathbf T\in\mathcal S_{k,\tau}$ and $\lambda\in (0,n\lambda^*)$, where
\begin{displaymath}
R(\mathbf A) :=\mathbb E(|Y_1 -\langle\mathbf X_1,\mathbf A\rangle_{\mathcal F}|^2)
\textrm{ $;$ }
\forall\mathbf A\in\mathcal M_{d,T}(\mathbb R),
\end{displaymath}
$\mathfrak c_{\ref{preliminary_risk_bound}} = 4\max\{4\mathfrak m_{0}^{2},4\mathfrak v_{\xi},4\mathfrak m_{\varepsilon}^{2},2\mathfrak m_{\varepsilon}^{2}\Phi_{\varepsilon}^{2}\mathfrak c_{\Pi}\}$ and $\lambda^* = (16\mathfrak m_0\max\{\mathfrak m_0,\mathfrak m_{\varepsilon},\mathfrak c_{\xi}\})^{-1}$.
\end{proposition}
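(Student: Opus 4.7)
\medskip
\noindent
\textbf{Plan of proof.} Set $\delta_i := \langle \mathbf X_i,(\mathbf T-\mathbf T^0)\mathbf\Lambda\rangle_{\mathcal F}$, so that by expanding the squares and using $Y_i = \langle \mathbf X_i,\mathbf\Theta^0\rangle_{\mathcal F}+\overline\xi_i$ with $\overline\xi_i = \varepsilon_{\chi_i}+\xi_i$, the risk difference reads
\[
 f_{\mathbf T}(\mathbf X_i,Y_i) := (Y_i-\langle\mathbf X_i,\mathbf T\mathbf\Lambda\rangle_{\mathcal F})^2 - (Y_i-\langle\mathbf X_i,\mathbf T^0\mathbf\Lambda\rangle_{\mathcal F})^2 = \delta_i^2 - 2\xi_i\delta_i - 2\varepsilon_{\chi_i}\delta_i.
\]
Using independence of $(\mathbf X_i)$, $(\xi_i)$ and $\varepsilon$ together with $\mathbb E\xi_i=0$ and $\mathbb E\varepsilon_{j,t}=0$, I would check that $\mathbb E f_{\mathbf T}(\mathbf X_1,Y_1)=\|(\mathbf T-\mathbf T^0)\mathbf\Lambda\|_{\mathcal F,\Pi}^2 = R(\mathbf T\mathbf\Lambda)-R(\mathbf T^0\mathbf\Lambda)=:\Delta$. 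Inequality (\ref{preliminary_risk_bound_1}) is then the algebraic identity
\[
\mathbb E\Bigl[\exp\bigl(\tfrac{\lambda}{4n}\textstyle\sum_i(f_{\mathbf T}(\mathbf X_i,Y_i)-\Delta)\bigr)\Bigr]\leqslant \exp\Bigl(\tfrac{\mathfrak c_{\ref{preliminary_risk_bound}}\lambda^2}{4n}\Delta\Bigr),
\]
i.e.\ an MGF bound whose correction term is proportional to the ``variance'' $\Delta$.

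The key observation is the conditional factorization: conditionally on $(\mathbf X_i)_{i=1,\ldots,n}$, the $\delta_i$'s become deterministic, and the families $(\xi_i)$ and $\varepsilon$ remain independent. I will therefore write
\[
\mathbb E\Bigl[e^{\frac{\lambda}{4n}\sum_i(f_{\mathbf T}-\Delta)}\Bigr] = \mathbb E\Bigl[e^{\frac{\lambda}{4n}\sum_i(\delta_i^2-\Delta)}\cdot \mathbb E_{\xi}\bigl[e^{-\frac{\lambda}{2n}\sum_i\xi_i\delta_i}\mid \mathbf X\bigr]\cdot \mathbb E_{\varepsilon}\bigl[e^{-\frac{\lambda}{2n}\sum_i\varepsilon_{\chi_i}\delta_i}\mid\mathbf X\bigr]\Bigr]
\]
and treat the three conditional MGF's separately. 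The first factor is fully $\mathbf X$-measurable; the $(\delta_i^2-\Delta)$ are i.i.d., centred and bounded by $4\mathfrak m_0^2$ with variance at most $4\mathfrak m_0^2\Delta$, so the standard Bernstein MGF bound yields the contribution $4\mathfrak m_0^2$ to $\mathfrak c_{\ref{preliminary_risk_bound}}$ under $\lambda/n\le 1/(16\mathfrak m_0^2)$. For the second factor, conditional on $\mathbf X$ the variables $-2\xi_i\delta_i$ are independent with $|\delta_i|\le 2\mathfrak m_0$ and, thanks to Assumption \ref{assumption_boundedness}, are sub-exponential in the Bernstein sense with variance bounded by $4\delta_i^2\mathfrak v_\xi$ and tail constant $2|\delta_i|\mathfrak c_\xi\le 4\mathfrak m_0\mathfrak c_\xi$. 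The Bernstein MGF then yields the contribution $4\mathfrak v_\xi$ provided $\lambda/n<1/(16\mathfrak m_0\mathfrak c_\xi)$.

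The only genuine difficulty lies in the third factor, which involves the temporally dependent noise $\varepsilon$. Conditionally on $(\mathbf X_i)$, I would reorganise the sum by row/column: $\sum_i \varepsilon_{\chi_i}\delta_i = \sum_{j,t} n_{j,t}((\mathbf T-\mathbf T^0)\mathbf\Lambda)_{j,t}\varepsilon_{j,t}$ where $n_{j,t}:=\#\{i:\chi_i=(j,t)\}$. Since the rows $(\varepsilon_{j,\cdot})_{j=1,\ldots,d}$ are independent, the conditional MGF factorises as a product over $j$, and within each row the coefficients $(n_{j,t}((\mathbf T-\mathbf T^0)\mathbf\Lambda)_{j,t})_{t=1,\ldots,T}$ multiply a $\phi$-mixing bounded sequence. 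This is exactly the situation handled by Samson's Bernstein inequality for $\phi$-mixing processes \cite{SAMSON00}, which produces a bound of the form $\exp\bigl(\frac{C\lambda^2}{n^2}\mathfrak m_\varepsilon^2\Phi_\varepsilon^2 \sum_{j,t} n_{j,t}^2((\mathbf T-\mathbf T^0)\mathbf\Lambda)_{j,t}^2\bigr)$ on a range $\lambda\in(0,n\lambda^*)$ controlled by the product of $\mathfrak m_0$ and the bound on $\varepsilon$. The hard step is now purely algebraic: take expectation in $\mathbf X$ using $\mathbb E n_{j,t}^2\leqslant n\mathfrak c_\Pi/(dT)+n^2\mathfrak c_\Pi^2/(dT)^2$ (combining Assumption \ref{assumption_probabilities} with the i.i.d.\ structure of the $\mathbf X_i$'s), observe that the dominant $n^2$-term gives $\mathbb E\bigl[\sum_{j,t}n_{j,t}^2((\mathbf T-\mathbf T^0)\mathbf\Lambda)_{j,t}^2\bigr]\lesssim n\mathfrak c_\Pi\|(\mathbf T-\mathbf T^0)\mathbf\Lambda\|_{\mathcal F,\Pi}^2$, so that the $\varepsilon$-factor contributes $2\mathfrak m_\varepsilon^2\Phi_\varepsilon^2\mathfrak c_\Pi$ to $\mathfrak c_{\ref{preliminary_risk_bound}}$. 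This is where I expect most of the book-keeping; the careful bookkeeping on the ranges of $\lambda$ and constants is what produces the specific $\mathfrak c_{\ref{preliminary_risk_bound}}$ and $\lambda^*$ in the statement.

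Finally, multiplying the three bounds and taking the outer $\mathbf X$-expectation yields (\ref{preliminary_risk_bound_1}). Inequality (\ref{preliminary_risk_bound_2}) is obtained by the same computation applied to $-f_{\mathbf T}$: the decomposition of $-f_{\mathbf T}-(-\Delta)$ has exactly the same three ingredients (now with a flipped sign on $\delta_i^2-\Delta$ and on the noise terms), Bernstein's and Samson's inequalities are two-sided, and the variance terms are identical, so the same constant $\mathfrak c_{\ref{preliminary_risk_bound}}$ and the same range $\lambda\in(0,n\lambda^*)$ suffice.
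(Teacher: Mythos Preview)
Your decomposition and the overall strategy are close to the paper's, but the treatment of the $\varepsilon$-term has a genuine gap. You condition on $(\mathbf X_i)_i$, apply Samson's inequality row by row, and obtain a bound of the form
\[
\mathbb E_\varepsilon\!\left[\exp\Bigl(-\tfrac{\lambda}{2n}\textstyle\sum_i\varepsilon_{\chi_i}\delta_i\Bigr)\,\middle|\,\mathbf X\right]
\leqslant
\exp\!\left(\frac{C\lambda^2\mathfrak m_\varepsilon^2\Phi_\varepsilon^2}{n^2}\sum_{j,t}n_{j,t}^2\,[(\mathbf T-\mathbf T^0)\mathbf\Lambda]_{j,t}^2\right).
\]
You then write ``take expectation in $\mathbf X$ using $\mathbb En_{j,t}^2\leqslant\ldots$''. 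This step is not valid: the random quantity $\sum_{j,t}n_{j,t}^2[\cdots]_{j,t}^2$ sits \emph{inside the exponential}, so controlling its expectation is not enough --- you would need an MGF bound for this quadratic form in the multinomial counts $(n_{j,t})$, which you do not provide (and which would not produce the clean constant $2\mathfrak m_\varepsilon^2\Phi_\varepsilon^2\mathfrak c_\Pi$). The same issue affects the $\xi$-factor: conditionally on $\mathbf X$, Bernstein gives a bound involving $\sum_i\delta_i^2$ rather than $n\Delta$, and this $\mathbf X$-measurable exponent must then be integrated together with the first factor $e^{\frac{\lambda}{4n}\sum_i(\delta_i^2-\Delta)}$. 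You cannot simply apply Bernstein to the first factor on its own and multiply the three bounds, because all three still depend on $\mathbf X$.

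The paper avoids both problems by two devices. First, instead of multiplying the three conditional MGFs, it uses the convexity of $\exp$ to split
$\exp\bigl(\tfrac{\lambda}{4}(A+B+C+D)\bigr)\leqslant\tfrac14\bigl(e^{\lambda A}+e^{\lambda B}+e^{\lambda C}+e^{\lambda D}\bigr)$
into four \emph{separate} MGFs, so no product has to be disentangled. Second --- and this is the key idea you are missing --- for the $\varepsilon$-term the paper conditions on $\varepsilon$, not on $\mathbf X$: writing $B_i=D_i+E_i$ with $E_i=\mathbb E(B_i\mid\varepsilon)=\tfrac{2}{n}\sum_{j,t}p_{j,t}[(\mathbf T^0-\mathbf T)\mathbf\Lambda]_{j,t}\varepsilon_{j,t}$, the coefficients multiplying $\varepsilon_{j,t}$ are the \emph{deterministic} probabilities $p_{j,t}$ rather than the random counts $n_{j,t}$. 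Samson's inequality then gives a deterministic bound directly, and Assumption~\ref{assumption_probabilities} turns $\sum_{j,t}p_{j,t}^2[\cdots]^2\leqslant\tfrac{\mathfrak c_\Pi}{dT}\Delta\leqslant\tfrac{\mathfrak c_\Pi}{n}\Delta$. The remaining piece $D_i=B_i-\mathbb E(B_i\mid\varepsilon)$ is handled by Bernstein conditionally on $\varepsilon$ and contributes the fourth constant $4\mathfrak m_\varepsilon^2$ in $\mathfrak c_{\ref{preliminary_risk_bound}}$, which your three-term split never produces.
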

%


%
\begin{proof}[Proof of Proposition \ref{preliminary_risk_bound}]
The proof relies on Bernstein's inequality as stated in \cite{NLM13}, that we remind in the following lemma.
%


%
\begin{lemma}\label{Bernstein_independent_rv}
Let $T_1,\dots,T_n$ be some independent and real-valued random variables. Assume that there are $v > 0$ and $c > 0$ such that
\begin{displaymath}
\sum_{i = 1}^{n}
\mathbb E(T_{i}^{2})
\leqslant v
\end{displaymath}
and, for any $q\geqslant 3$,
\begin{displaymath}
\sum_{i = 1}^{n}
\mathbb E(T_{i}^{q})
\leqslant
\frac{vc^{q - 2}q!}{2}.
\end{displaymath}
Then, for every $\lambda\in(0,1/c)$,
\begin{displaymath}
\mathbb E
\left[\exp\left[\lambda\sum_{i = 1}^{n}
(T_i - \mathbb E(T_i))\right]\right]
\leqslant
\exp\left(\frac{v\lambda^2}{2(1 - c\lambda)}\right).
\end{displaymath}
\end{lemma}
\noindent
We will also use a variant of this inequality for time series due to Samson, stated in the proof of Theorem 3 in \cite{SAMSON00}.
%


%
\begin{lemma}\label{Bernstein_dependent_rv}
Consider $m\in\mathbb N^*$, $M > 0$, a stationary sequence of $\mathbb R^m$-valued random variables $Z = (Z_t)_{t\in\mathbb Z}$, and
\begin{displaymath}
\Phi_Z :=
1 +\sum_{t = 1}^{T}\phi_Z(t)^{1/2},
\end{displaymath}
where $\phi_Z(t)$, $t\in\mathbb Z$, are the $\phi$-mixing coefficients of $Z$. For every smooth and convex function $f : [0,M]^T\rightarrow\mathbb R$ such that $\|\nabla f\|\leqslant L$ a.e, for any $\lambda>0$,
\begin{displaymath}
\mathbb E(\exp(\lambda(f(Z_1,\dots,Z_T) - \mathbb E[f(Z_1,\dots,Z_T)])))
\leqslant
\exp\left(\frac{
\lambda^2L^2\Phi_{Z}^{2}M^2}{2}\right).
\end{displaymath}
\end{lemma}
\noindent
Let $\mathbf T\in\mathcal S_{k,\tau}$ be arbitrarily chosen. Consider the deterministic map $\mathbf X :\mathcal E\rightarrow\mathcal M_{d,T}(\mathbb R)$ such that
\begin{displaymath}
\mathbf X_i =\mathbf X(\chi_i)
\textrm{ $;$ }
\forall i\in\{1,\dots,n\},
\end{displaymath}
$\Xi_i := (\overline\xi_i,\chi_i)$ for any $i\in\{1,\dots,n\}$, and $h :\mathbb R\times\mathcal E\rightarrow\mathbb R$ the map defined by
\begin{displaymath}
h(x,y) :=
\frac{1}{n}(2x\langle\mathbf X(y),(\mathbf T^0 -\mathbf T)\mathbf\Lambda\rangle_{\mathcal F}
+\langle\mathbf X(y),(\mathbf T^0 -\mathbf T)\mathbf\Lambda\rangle_{\mathcal F}^{2})
\textrm{ $;$ }
\forall (x,y)\in\mathbb R\times\mathcal E.
\end{displaymath}
Note that
\begin{eqnarray*}
 h(\Xi_i) & = &
 \frac{1}{n}(2\overline\xi_i\langle\mathbf X_i,(\mathbf T^0 -\mathbf T)\mathbf\Lambda\rangle_{\mathcal F}
 +\langle\mathbf X_i,(\mathbf T^0 -\mathbf T)\mathbf\Lambda\rangle_{\mathcal F}^{2})\\
 & = &
 \frac{1}{n}((\overline\xi_i +\langle\mathbf X_i,(\mathbf T^0 -\mathbf T)\mathbf\Lambda\rangle_{\mathcal F})^2 -\overline\xi_{i}^{2})\\
 & = &
 \frac{1}{n}(
 (Y_i -\langle\mathbf X_i,\mathbf T\mathbf\Lambda\rangle_{\mathcal F})^2 -
 (Y_i -\langle\mathbf X_i,\mathbf T^0\mathbf\Lambda\rangle_{\mathcal F})^2)
\end{eqnarray*}
and
\begin{displaymath}
\sum_{i = 1}^{n}
(h(\Xi_i) -\mathbb E(h(\Xi_i))) =
r_n(\mathbf T\mathbf\Lambda) -
r_n(\mathbf T^0\mathbf\Lambda) +
R(\mathbf T^0\mathbf\Lambda) -
R(\mathbf T\mathbf\Lambda).
\end{displaymath}
Now, replacing $\overline{\xi}_i$ by its expression in terms of $\mathbf X_i$, $\xi_i$ and $\varepsilon$,
\begin{eqnarray*}
 \sum_{i = 1}^{n}
 (h(\Xi_i) -\mathbb E(h(\Xi_i)))
 & = &
 \sum_{i = 1}^{n}
 \left(\frac{2}{n}\xi_i
 \langle\mathbf X_i,(\mathbf T^0 -\mathbf T)\mathbf\Lambda\rangle_{\mathcal F}\right)\\
 & &
 +\sum_{i = 1}^{n}
 \left(\frac{2}{n}\langle\mathbf X_i,\varepsilon\rangle_{\mathcal F}\langle\mathbf X_i,(\mathbf T^0 -\mathbf T)\mathbf\Lambda\rangle_{\mathcal F}\right)\\
 & &
 +\sum_{i =1}^{n}
 \left(\frac{1}{n}\langle\mathbf X_i,(\mathbf T^0 -\mathbf T)\mathbf\Lambda\rangle_{\mathcal F}^{2} - \mathbb E(h(\Xi_i))\right)\\
 & =: &
 \sum_{i = 1}^{n}A_i +
 \sum_{i = 1}^{n}B_i +
 \sum_{i = 1}^{n}(C_i -\mathbb E(h(\Xi_i))).
\end{eqnarray*}
In order to conclude, by using Lemmas \ref{Bernstein_independent_rv} and \ref{Bernstein_dependent_rv}, let us provide suitable bounds for the exponentiel moments of each terms of the previous decomposition:
\begin{itemize}
 \item\textbf{Bounds for the $A_i$'s and the $C_i$'s.} First, note that since $\mathbf X_1$, $\xi_1$ and $\varepsilon$ are independent,
 \begin{eqnarray}
  R(\mathbf T\mathbf\Lambda) - R(\mathbf T^0\mathbf\Lambda) & = &
  \mathbb E((Y_1 -\langle\mathbf X_1,\mathbf T\mathbf\Lambda\rangle_{\mathcal F})^2 -
  (Y_1 -\langle\mathbf X_1,\mathbf T^0\mathbf\Lambda\rangle_{\mathcal F})^2)
  \nonumber\\
  & = &
  2\mathbb E(\overline\xi_1\langle\mathbf X_1,(\mathbf T^0 -\mathbf T)\mathbf\Lambda\rangle_{\mathcal F})
  +\mathbb E(\langle\mathbf X_1,(\mathbf T^0 -\mathbf T)\mathbf\Lambda\rangle_{\mathcal F}^{2})
  \nonumber\\
  & = &
  2\langle\mathbb E(\langle\mathbf X_1,(\mathbf T^0 -\mathbf T)\mathbf\Lambda\rangle_{\mathcal F}\mathbf X_1),\mathbb E(\varepsilon)\rangle_{\mathcal F}\nonumber\\
  & &
  + 2\mathbb E(\xi_1)\mathbb E(\langle\mathbf X_1,(\mathbf T^0 -\mathbf T)\mathbf\Lambda\rangle_{\mathcal F})
  + \|(\mathbf T^0 -\mathbf T)\mathbf\Lambda\|_{\mathcal F,\Pi}^{2}
  \nonumber\\
  \label{preliminary_risk_bound_3}
  & = &
  \|(\mathbf T^0 -\mathbf T)\mathbf\Lambda\|_{\mathcal F,\Pi}^{2}.
 \end{eqnarray}
 On the one hand,
 \begin{displaymath}
 \mathbb E(A_{i}^{2})
 \leqslant\frac{4}{n^2}\mathbb E(\xi_{i}^{2})
 \mathbb E(\langle\mathbf X_i,
 (\mathbf T^0 -\mathbf T)\mathbf\Lambda\rangle_{\mathcal F}^{2})
 \leqslant \frac{4}{n^2}
 \mathfrak v_{\xi}(R(\mathbf T^0\mathbf\Lambda)
 - R(\mathbf T\mathbf\Lambda))
 \end{displaymath}
 thanks to Equality (\ref{preliminary_risk_bound_3}). Moreover,
 \begin{eqnarray*}
  \mathbb E(|A_i|^q)
  & \leqslant &
  \frac{2^q}{n^q}\mathbb E(|\xi_i|^q)
  \mathbb E(\langle\mathbf X_i,(\mathbf T^0 -\mathbf T)
  \mathbf\Lambda\rangle_{\mathcal F}^{q})\\
  & \leqslant &
  \left(\frac{4\mathfrak c_{\xi}\mathfrak m_0}{n}\right)^{q - 2}
  \frac{q!}{2}\cdot
  \frac{4\mathfrak v_{\xi}}{n^2}
  (R(\mathbf T^0\mathbf\Lambda)
  - R(\mathbf T\mathbf\Lambda)).
 \end{eqnarray*}
 So, we can use Lemma \ref{Bernstein_independent_rv} with
 \begin{displaymath}
 v =\frac{4}{n}\mathfrak v_{\xi}(
 R(\mathbf{T}^{0}\mathbf\Lambda)
 - R(\mathbf T\mathbf\Lambda))
 \textrm{ and }
 c =\frac{4\mathfrak c_{\xi}\mathfrak m_0}{n}
 \end{displaymath}
 to obtain:
 \begin{displaymath}
 \mathbb E\left[\exp\left(\lambda \sum_{i = 1}^{n} A_i\right)\right]
 \leqslant
 \exp\left[\frac{2\mathfrak v_{\xi}
 (R(\mathbf T^0\mathbf\Lambda) - R(\mathbf T\mathbf\Lambda))
 \lambda^2}{n - 4\mathfrak c_{\xi}\mathfrak m_0\lambda}\right]
 \end{displaymath}
 for any $\lambda\in (0,n/(4\mathfrak c_{\xi}\mathfrak m_0))$. On the other hand, $|C_i|\leqslant 4\mathfrak m_{0}^{2}/n$ and
 \begin{displaymath}
 \mathbb E(C_{i}^{2}) =
 \frac{1}{n^2}
 \mathbb E(\langle\mathbf X_i,(\mathbf T^0 -\mathbf T)\mathbf\Lambda\rangle_{\mathcal F}^{4})
 \leqslant
 \frac{4\mathfrak m_{0}^{2}}{n^2}
 \|(\mathbf T^0 -\mathbf T)\mathbf\Lambda\|_{\mathcal F,\Pi}^{2} =
 \frac{4}{n^2}
 \mathfrak m_{0}^{2}
 (R(\mathbf T^0\mathbf\Lambda) - R(\mathbf T\mathbf\Lambda))
 \end{displaymath}
 thanks to Equality (\ref{preliminary_risk_bound_3}). So, we can use Lemma \ref{Bernstein_independent_rv} with
 \begin{displaymath}
 v =\frac{4}{n}\mathfrak m_{0}^{2}(
 R(\mathbf{T}^{0}\mathbf\Lambda)
 - R(\mathbf T\mathbf\Lambda))
 \textrm{ and }
 c =\frac{4\mathfrak m_{0}^{2}}{n}
 \end{displaymath}
 to obtain:
 \begin{displaymath}
 \mathbb E\left[\exp\left(\lambda\sum_{i = 1}^{n}
 (C_i -\mathbb E(h(\Xi_i)))\right)\right]
 \leqslant
 \exp\left[\frac{2\mathfrak m_{0}^{2}
 (R(\mathbf T^0\mathbf\Lambda) - R(\mathbf T\mathbf\Lambda))
 \lambda^2}{n - 4\mathfrak m_{0}^{2}\lambda}\right]
 \end{displaymath}
 for any $\lambda\in (0,n/(4\mathfrak m_{0}^{2}))$.
 \item\textbf{Bounds for the $B_i$'s.} First, write
 \begin{displaymath}
 \sum_{i = 1}^{n}B_i
 =\sum_{i = 1}^{n}
 (B_i - \mathbb E(B_i|\varepsilon))
 +\sum_{i = 1}^{n}
 \mathbb E(B_i|\varepsilon)
 =:\sum_{i = 1}^{n}D_i + \sum_{i = 1}^{n}E_i,
 \end{displaymath}
 and note that
 \begin{eqnarray}
  \label{preliminary_risk_bound_4}
  \mathbb E(B_i|\varepsilon) & = &
  \frac{2}{n}
  \mathbb E(\langle\mathbf X_i,\varepsilon\rangle_{\mathcal F}
  \langle\mathbf X_i,(\mathbf T^0 -\mathbf T)\mathbf\Lambda\rangle_{\mathcal F}|\varepsilon)\\
  & = &
  \frac{2}{n}
  \sum_{j,t}
  \mathbb E(\mathbf 1_{\chi_i = (j,t)}
  [(\mathbf T^0 -\mathbf T)\mathbf\Lambda]_{\chi_i})\varepsilon_{j,t}
  =\frac{2}{n}
  \sum_{j,t}
  p_{j,t}[(\mathbf T^0 -\mathbf T)\mathbf\Lambda]_{j,t}\varepsilon_{j,t}
  \nonumber
 \end{eqnarray}
 and
 \begin{equation}\label{preliminary_risk_bound_5}
 \|(\mathbf T^0 -\mathbf T)\mathbf\Lambda\|_{\mathcal F,\Pi}^{2} =
 \mathbb E(\langle\mathbf X_i,(\mathbf T^0 -\mathbf T)\mathbf\Lambda\rangle_{\mathcal F}^{2})
 =\mathbb E([(\mathbf T^0 -\mathbf T)\mathbf\Lambda]_{\chi_i}^{2})
 =\sum_{j,t}p_{j,t}
 [(\mathbf T^0 -\mathbf T)\mathbf\Lambda]_{j,t}^{2},
 \end{equation}
 where
 \begin{displaymath}
 p_{j,t} :=\mathbb P(\chi_1 = (j,t)) =\Pi(\{e_{\mathbb R^d}(j)e_{\mathbb R^T}(t)^*\})
 \end{displaymath}
 for every $(j,t)\in\mathcal E$. On the one hand, given $\varepsilon$, the $D_i$'s are i.i.d, $|D_i|\leqslant 8\mathfrak m_{\varepsilon} \mathfrak m_0/n$ and
 \begin{align*}
 \mathbb E(B_i^2|\varepsilon)
 & =\frac{4}{n^2}\mathbb E(\langle\mathbf X_i,\varepsilon\rangle_{\mathcal F}^{2}
 \langle\mathbf X_i,(\mathbf T^0 -\mathbf T)\mathbf\Lambda\rangle_{\mathcal F}^{2}|\varepsilon)\\
 & \leqslant\frac{4}{n^2}\mathfrak m_{\varepsilon}^{2}
 \mathbb E(\langle\mathbf X_i,(\mathbf T^0 -\mathbf T)\mathbf\Lambda\rangle_{\mathcal F}^{2}|\varepsilon)
 =\frac{4}{n^2}\mathfrak m_{\varepsilon}^{2}\mathbb E(\langle\mathbf X_i,(\mathbf T^0 -\mathbf T)\mathbf\Lambda\rangle_{\mathcal F}^{2})
 = \frac{4}{n^2}\mathfrak m_{\varepsilon}^2(R(\mathbf T^0 \mathbf\Lambda) - R(\mathbf T \mathbf\Lambda))
 \end{align*}
 thanks to Equality (\ref{preliminary_risk_bound_3}). So, {\it conditionnally on $\varepsilon$}, we can apply Lemma \ref{Bernstein_independent_rv} with
 \begin{displaymath}
 v =\frac{4}{n}\mathfrak m_{\varepsilon}^{2}(
 R(\mathbf{T}^{0}\mathbf\Lambda)
 - R(\mathbf T\mathbf\Lambda))
 \textrm{ and }
 c =\frac{8\mathfrak m_{\varepsilon}\mathfrak m_0}{n}
 \end{displaymath}
 to obtain:
 \begin{displaymath}
 \mathbb E\left[\left.\exp\left(\lambda\sum_{i = 1}^{n}D_i\right)\right|\varepsilon\right]
 \leqslant
 \exp\left[\frac{2\mathfrak m_{\varepsilon}^{2}
 (R(\mathbf T^0\mathbf\Lambda) - R(\mathbf T\mathbf\Lambda))
 \lambda^2}{n - 8\mathfrak m_{\varepsilon}\mathfrak m_0\lambda}\right]
 \end{displaymath}
 for any $\lambda\in (0,n/(8\mathfrak m_{\varepsilon}\mathfrak m_0))$. Taking the expectation of both sides gives:
 \begin{displaymath}
 \mathbb E\left[\exp\left(\lambda\sum_{i = 1}^{n}D_i\right)\right]
 \leqslant
 \exp\left[\frac{2\mathfrak m_{\varepsilon}^{2}
 (R(\mathbf T^0\mathbf\Lambda) - R(\mathbf T\mathbf\Lambda))
 \lambda^2}{n - 8\mathfrak m_{\varepsilon}\mathfrak m_0\lambda}\right].
 \end{displaymath}
 On the other hand, let us focus on the $E_i$'s. Thanks to Equality (\ref{preliminary_risk_bound_4}) and since the rows of $\varepsilon$ are independent,
 \begin{eqnarray*}
  \mathbb E\left[\exp\left(\lambda
  \sum_{i = 1}^{n}E_i\right)\right] & = &
  \mathbb E\left[\exp\left[2\lambda
  \sum_{j,t}p_{j,t}[(\mathbf T^0 -\mathbf T)\mathbf\Lambda]_{j,t}\varepsilon_{j,t}\right]\right]\\
  & = &
  \prod_{j = 1}^{d}
  \mathbb E\left[\exp\left(
  2\lambda\sum_{t = 1}^{T}p_{j,t}[(\mathbf T^0 -\mathbf T)\mathbf\Lambda]_{j,t}\varepsilon_{j,t}\right)\right].
 \end{eqnarray*}
 Now, for any $j\in\{1,\dots,d\}$, let us apply Lemma \ref{Bernstein_dependent_rv} to $(\varepsilon_{j,1},\dots,\varepsilon_{j,T})$, which is a sample of a $\phi$-mixing sequence, and to the function $f_j : [0,\mathfrak m_{\varepsilon}]^T\rightarrow\mathbb R$ defined by
 \begin{displaymath}
 f_j(u_1,\dots,u_T) :=
 2\sum_{t = 1}^{T}p_{j,t}[(\mathbf T^0 -\mathbf T)\mathbf\Lambda]_{j,t}u_t
 \textrm{ $;$ }
 \forall u\in [0,\mathfrak m_{\varepsilon}]^T.
 \end{displaymath}
 Since
 \begin{displaymath}
 \|\nabla f_j(u_1,\dots,u_T)\|^2 =
 4\sum_{t = 1}^{T}p_{j,t}^{2}[(\mathbf T^0 -\mathbf T)\mathbf\Lambda]_{j,t}^{2}
 \textrm{ $;$ }
 \forall u\in [0,\mathfrak m_{\varepsilon}]^T,
 \end{displaymath}
 by Lemma \ref{Bernstein_dependent_rv}:
 \begin{eqnarray*}
 \mathbb E\left[\exp\left(2\lambda\sum_{t = 1}^{T}
 p_{j,t}[(\mathbf T^0 -\mathbf T)\mathbf\Lambda]_{j,t}\varepsilon_{j,t}\right)\right]
 & = &
 \mathbb E(\exp(\lambda(f_j(\varepsilon_{j,1},\dots,\varepsilon_{j,T})
 -\mathbb{E}[f_j(\varepsilon_{j,1},\dots,\varepsilon_{j,T})])))\\
 & \leqslant &
 \exp\left(
 2\mathfrak m_{\varepsilon}^2\lambda^2\Phi_{\varepsilon}^{2}
 \sum_{t = 1}^{T}
 p_{j,t}^{2}[(\mathbf T^0 -\mathbf T)\mathbf\Lambda]_{j,t}^{2}\right).
 \end{eqnarray*}
 Thus, for any $\lambda > 0$, by Equalities (\ref{preliminary_risk_bound_3}) and (\ref{preliminary_risk_bound_5}) together with $n\leqslant dT$,
 \begin{align*}
 \mathbb E\left[\exp\left(
 \lambda\sum_{i = 1}^{n}E_i\right)\right]
 & =\prod_{j = 1}^{d}
 \mathbb E\left[\exp\left(
 2\lambda\sum_{t = 1}^{T}p_{j,t}[(\mathbf T^0 -\mathbf T)\mathbf\Lambda]_{j,t}\varepsilon_{j,t}\right)\right]\\
 & \leqslant
 \prod_{j = 1}^{d}\exp\left(
 2\mathfrak m_{\varepsilon}^2\lambda^2\Phi_{\varepsilon}^{2}
 \sum_{t = 1}^{T}
 p_{j,t}^{2}[(\mathbf T^0 -\mathbf T)\mathbf\Lambda]_{j,t}^{2}\right)\\
 & \leqslant\exp\left[
 \frac{2\mathfrak m_{\varepsilon}^2\lambda^2\Phi_{\varepsilon}^{2}\mathfrak c_{\Pi}}{dT}
 \sum_{j,t}
 p_{j,t}[(\mathbf T^0 -\mathbf T)\mathbf\Lambda]_{j,t}^{2}\right]
 \leqslant\exp\left[
 \frac{2\mathfrak m_{\varepsilon}^2\lambda^2\Phi_{\varepsilon}^{2}\mathfrak c_{\Pi}}{n}
 (R(\mathbf T^0\mathbf\Lambda) - R(\mathbf T\mathbf\Lambda))\right].
 \end{align*}
\end{itemize}
Therefore, these bounds together with Jensen's inequality give:
\begin{align*}
 \mathbb E\exp & \left(\frac{\lambda}{4}
 [r_n(\mathbf T\mathbf\Lambda) - r_n(\mathbf T^0\mathbf\Lambda) +
 R(\mathbf T^0\mathbf\Lambda) - R(\mathbf T\mathbf\Lambda)]\right)\\
 & = 
 \mathbb E\left[\exp\left(\frac{\lambda}{4}
 \sum_{i = 1}^{n}(h(\Xi_i) -\mathbb E(h(\Xi_i)))\right)\right]\\
 & = \mathbb E\left[\exp\left(\frac{\lambda}{4}
 \sum_{i = 1}^{n}A_i +\frac{\lambda}{4}
 \sum_{i = 1}^{n}(C_i -\mathbb E(h(\Xi_i))) +
 \frac{\lambda}{4}\sum_{i = 1}^{n}D_i +
 \frac{\lambda}{4}\sum_{i = 1}^{n}E_i\right)\right]\\
 & \leqslant\frac{1}{4}\left[
 \mathbb E\left[\exp\left(\lambda\sum_{i = 1}^{n}A_i\right)\right] +
 \mathbb E\left[\exp\left(\lambda\sum_{i = 1}^{n}(C_i -\mathbb E(h(\Xi_i)))\right)\right]\right.\\
 &
 \quad +\left.
 \mathbb E\left[\exp\left(\lambda\sum_{i = 1}^{n}D_i\right)\right] +
 \mathbb E\left[\exp\left(\lambda\sum_{i = 1}^{n}E_i\right)\right]\right]\\
 & \leqslant
 \exp\left[\frac{2\mathfrak v_{\xi}}{1 - 4\mathfrak c_{\xi}\mathfrak m_0\lambda/n}
 \cdot\frac{\lambda^2}{n}
 (R(\mathbf T^0\mathbf\Lambda) - R(\mathbf T\mathbf\Lambda))\right]
 +\exp\left[\frac{2\mathfrak m_{0}^{2}}{1 - 4\mathfrak m_{0}^{2}\lambda/n}
 \cdot\frac{\lambda^2}{n}
 (R(\mathbf T^0\mathbf\Lambda) - R(\mathbf T\mathbf\Lambda))\right]\\
 & \quad
 +\exp\left[\frac{2\mathfrak m_{\varepsilon}^{2}}{1 - 8\mathfrak m_{\varepsilon}\mathfrak m_0\lambda/n}
 \cdot\frac{\lambda^2}{n}
 (R(\mathbf T^0\mathbf\Lambda) - R(\mathbf T\mathbf\Lambda))\right] +
 \exp\left[2\mathfrak m_{\varepsilon}^2\Phi_{\varepsilon}^{2}\mathfrak c_{\Pi}
 \frac{\lambda^2}{n}
 (R(\mathbf T^0\mathbf\Lambda) - R(\mathbf T\mathbf\Lambda))\right]\\
 & \leqslant
 \exp\left[\mathfrak c_\lambda
 \frac{\lambda^2}{n}(R(\mathbf T^0\mathbf\Lambda) - R(\mathbf T\mathbf\Lambda))\right]
\end{align*}
with
\begin{displaymath}
\mathfrak c_{\lambda} =
\max\left\{
\frac{2\mathfrak v_{\xi}}{1 - 4\mathfrak c_{\xi}\mathfrak m_0\lambda/n},
\frac{2\mathfrak m_{0}^{2}}{1 - 4\mathfrak m_{0}^{2}\lambda/n},
\frac{2\mathfrak m_{\varepsilon}^{2}}{1 - 8\mathfrak m_{\varepsilon}\mathfrak m_0\lambda/n},
2\mathfrak m_{\varepsilon}^2\Phi_{\varepsilon}^{2}\mathfrak c_{\Pi}\right\}
\end{displaymath}
and
\begin{displaymath}
0 <\lambda <
n\min\left\{
\frac{1}{4\mathfrak c_{\xi}\mathfrak m_0},
\frac{1}{4\mathfrak m_{0}^{2}},
\frac{1}{8\mathfrak m_{\varepsilon}\mathfrak m_0}\right\}.
\end{displaymath}
In particular, for
\begin{displaymath}
\lambda <
\frac{n}{16\mathfrak m_0
\max\{\mathfrak m_0,\mathfrak m_{\varepsilon},
\mathfrak c_{\xi}\}},
\end{displaymath}
we have
\begin{displaymath}
\mathfrak c_{\lambda}
\leqslant
\max\{4\mathfrak m_{0}^{2},
4\mathfrak v_{\xi},
4\mathfrak m_{\varepsilon}^{2},
2\mathfrak m_{\varepsilon}^{2}\Phi_{\varepsilon}^{2}\mathfrak c_{\Pi}\}.
\end{displaymath}
This ends the proof of the first inequality.
\end{proof}
%


%
\subsection{A preliminary non-explicit risk bound}
We now provide a simpler version of Theorem~\ref{explicit_risk_bound}, that holds in the case where $\mathcal S_{k,\tau}$ is finite: (1) in the following theorem. When this is not the case, we provide a similar bound using a general $\epsilon$-net, that is (2) in the theorem.

\begin{theorem}\label{risk_bound}
Consider $\alpha\in ]0,1[$.
\begin{enumerate}
 \item Under Assumptions \ref{assumption_phi_mixing}, \ref{assumption_boundedness} and \ref{assumption_probabilities}, if $|\mathcal S_{k,\tau}| <\infty$, then
 \begin{displaymath}
 \|\widehat{\mathbf\Theta}_{k,\tau} -\mathbf\Theta^0\|_{\mathcal F,\Pi}^{2}
 \leqslant
 3\min_{\mathbf T\in\mathcal S_{k,\tau}}
 \|(\mathbf T -\mathbf T^0)\mathbf\Lambda\|_{\mathcal F,\Pi}^{2}
 +\frac{\mathfrak c_{\ref{risk_bound},1}}{n}
 \log\left(\frac{2}{\alpha}|\mathcal S_{k,\tau}|\right)
 \end{displaymath}
 with probability larger than $1 -\alpha$, where $\mathfrak c_{\ref{risk_bound},1} = 32
 (\mathfrak c_{\ref{preliminary_risk_bound}}^{-1}\wedge\lambda^*)^{-1}$.
 \item Under Assumptions \ref{assumption_phi_mixing}, \ref{assumption_boundedness} and \ref{assumption_probabilities}, for every $\epsilon > 0$, there exists a finite subset $\mathcal S_{k,\tau}^{\epsilon}$ of $\mathcal S_{k,\tau}$ such that
 \begin{displaymath}
 \|\widehat{\mathbf\Theta}_{k,\tau} -\mathbf\Theta^0\|_{\mathcal F,\Pi}^{2}
 \leqslant
 3\min_{\mathbf T\in\mathcal S_{k,\tau}}
 \|(\mathbf T -\mathbf T^0)\mathbf\Lambda\|_{\mathcal F,\Pi}^2 +
 \frac{\mathfrak c_{\ref{risk_bound},1}}{n}
 \log\left(\frac{2}{\alpha}|\mathcal S_{k,\tau}^{\epsilon}|\right) +
 \left[ \mathfrak c_{\ref{risk_bound},2}
 + 8\mathfrak m_{\mathbf\Lambda}\mathfrak c_{\xi}
 \log\left(\frac{1}{\alpha}\right)
 \right]\epsilon
 \end{displaymath}
 with probability larger than $1 -\alpha$, where $\mathfrak c_{\ref{risk_bound},2} = 4\mathfrak m_{\mathbf\Lambda}(\mathfrak v_{\xi}^{1/2} +\mathfrak v_{\xi}/(2\mathfrak c_{\xi}) +\mathfrak m_{\varepsilon} + 3\mathfrak m_0)$.
\end{enumerate}
\end{theorem}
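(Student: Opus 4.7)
The plan for part~(1) is to convert the exponential moment inequalities of Proposition~\ref{preliminary_risk_bound} into high-probability bounds by Markov's inequality, union-bound them over the finite set $\mathcal S_{k,\tau}$, and then exploit the defining optimality of $\widehat{\mathbf T}_{k,\tau}$. Concretely, for a parameter $\lambda\in(0,n\lambda^*)$, applying Markov's inequality to (\ref{preliminary_risk_bound_1}) and (\ref{preliminary_risk_bound_2}) gives that for every fixed $\mathbf T\in\mathcal S_{k,\tau}$, with probability at least $1-\alpha/(2|\mathcal S_{k,\tau}|)$ respectively,
\begin{align*}
(1+\mathfrak c_{\ref{preliminary_risk_bound}}\lambda/n)\|(\mathbf T-\mathbf T^0)\mathbf\Lambda\|_{\mathcal F,\Pi}^2 &\leqslant r_n(\mathbf T\mathbf\Lambda)-r_n(\mathbf T^0\mathbf\Lambda)+\tfrac{4}{\lambda}\log(2|\mathcal S_{k,\tau}|/\alpha),\\
(1-\mathfrak c_{\ref{preliminary_risk_bound}}\lambda/n)\|(\mathbf T-\mathbf T^0)\mathbf\Lambda\|_{\mathcal F,\Pi}^2 &\leqslant r_n(\mathbf T\mathbf\Lambda)-r_n(\mathbf T^0\mathbf\Lambda)+\tfrac{4}{\lambda}\log(2|\mathcal S_{k,\tau}|/\alpha),
\end{align*}
(after using (\ref{preliminary_risk_bound_3}) to identify $R(\mathbf T\mathbf\Lambda)-R(\mathbf T^0\mathbf\Lambda)$ with $\|(\mathbf T-\mathbf T^0)\mathbf\Lambda\|_{\mathcal F,\Pi}^2$). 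A union bound over $\mathcal S_{k,\tau}$ makes the second inequality hold simultaneously for every $\mathbf T$, in particular for $\widehat{\mathbf T}_{k,\tau}$; similarly the first one holds for the bias-minimizer $\mathbf T^*\in\arg\min_{\mathbf T\in\mathcal S_{k,\tau}}\|(\mathbf T-\mathbf T^0)\mathbf\Lambda\|_{\mathcal F,\Pi}^2$. Since $r_n(\widehat{\mathbf T}_{k,\tau}\mathbf\Lambda)\leqslant r_n(\mathbf T^*\mathbf\Lambda)$ by definition, chaining the two yields
\begin{displaymath}
(1-\mathfrak c_{\ref{preliminary_risk_bound}}\lambda/n)\|(\widehat{\mathbf T}_{k,\tau}-\mathbf T^0)\mathbf\Lambda\|_{\mathcal F,\Pi}^2\leqslant (1+\mathfrak c_{\ref{preliminary_risk_bound}}\lambda/n)\|(\mathbf T^*-\mathbf T^0)\mathbf\Lambda\|_{\mathcal F,\Pi}^2+\tfrac{8}{\lambda}\log(2|\mathcal S_{k,\tau}|/\alpha).
\end{displaymath}
Choosing $\lambda=n(\mathfrak c_{\ref{preliminary_risk_bound}}^{-1}\wedge\lambda^*)/2$ makes $\mathfrak c_{\ref{preliminary_risk_bound}}\lambda/n\leqslant 1/2$, so the ratio of bias coefficients is at most $3$ and $(1-\mathfrak c_{\ref{preliminary_risk_bound}}\lambda/n)^{-1}\leqslant 2$. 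This produces the constant $\mathfrak c_{\ref{risk_bound},1}=32(\mathfrak c_{\ref{preliminary_risk_bound}}^{-1}\wedge\lambda^*)^{-1}$ and finishes part~(1).

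For part~(2), I would apply the same argument but replace the intractable union bound over the infinite set $\mathcal S_{k,\tau}$ by a union bound over a finite $\epsilon$-net $\mathcal S_{k,\tau}^\epsilon\subset\mathcal S_{k,\tau}$, paying a discretization error of order $\epsilon$. For each $\mathbf T\in\mathcal S_{k,\tau}$, let $\mathbf T_\epsilon\in\mathcal S_{k,\tau}^\epsilon$ be a nearest net point so that $\sup_{j,t}|(\mathbf T\mathbf\Lambda-\mathbf T_\epsilon\mathbf\Lambda)_{j,t}|\leqslant\epsilon$ (up to a factor $\mathfrak m_{\mathbf\Lambda}$, depending on the precise metric used on the net). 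Then one needs to compare $r_n(\mathbf T\mathbf\Lambda)$ with $r_n(\mathbf T_\epsilon\mathbf\Lambda)$ and similarly for $\|\cdot\|_{\mathcal F,\Pi}^2$. Writing
\begin{displaymath}
r_n(\mathbf T\mathbf\Lambda)-r_n(\mathbf T_\epsilon\mathbf\Lambda)=\tfrac{1}{n}\sum_{i=1}^n\langle\mathbf X_i,(\mathbf T_\epsilon-\mathbf T)\mathbf\Lambda\rangle_{\mathcal F}\bigl(2Y_i-\langle\mathbf X_i,(\mathbf T+\mathbf T_\epsilon)\mathbf\Lambda\rangle_{\mathcal F}\bigr),
\end{displaymath}
each factor $\langle\mathbf X_i,(\mathbf T_\epsilon-\mathbf T)\mathbf\Lambda\rangle_{\mathcal F}$ is bounded by $\mathfrak m_{\mathbf\Lambda}\epsilon$, while the boundedness of $\mathbf\Theta^0$ and $\varepsilon$ handles the purely bounded part of $Y_i$. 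The sub-exponential part $\xi_i$ is controlled through a Bernstein-type maximal tail bound: with probability at least $1-\alpha/2$, $\max_{i\leqslant n}|\xi_i|\lesssim\mathfrak v_\xi^{1/2}+\mathfrak v_\xi/(2\mathfrak c_\xi)+\mathfrak c_\xi\log(n/\alpha)$ (whence the appearance of $8\mathfrak m_{\mathbf\Lambda}\mathfrak c_\xi\log(1/\alpha)\cdot\epsilon$ as the stochastic part of the discretization error), while the remaining terms give the deterministic contribution $\mathfrak c_{\ref{risk_bound},2}\epsilon=4\mathfrak m_{\mathbf\Lambda}(\mathfrak v_\xi^{1/2}+\mathfrak v_\xi/(2\mathfrak c_\xi)+\mathfrak m_\varepsilon+3\mathfrak m_0)\epsilon$.

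The remaining discretization term $\bigl|\|(\mathbf T-\mathbf T^0)\mathbf\Lambda\|_{\mathcal F,\Pi}^2-\|(\mathbf T_\epsilon-\mathbf T^0)\mathbf\Lambda\|_{\mathcal F,\Pi}^2\bigr|$ is deterministic and again of order $\epsilon$ via $\|\mathbf T-\mathbf T^0\|_\infty$ and $\|\mathbf T_\epsilon-\mathbf T^0\|_\infty$. One then replicates the part~(1) argument with $\mathcal S_{k,\tau}$ replaced by $\mathcal S_{k,\tau}^\epsilon$: apply (\ref{preliminary_risk_bound_1}) to the nearest net point $\mathbf T^*_\epsilon$ to the bias-minimizer, apply (\ref{preliminary_risk_bound_2}) uniformly over $\mathcal S_{k,\tau}^\epsilon$ and specialize it to the nearest net point $\widehat{\mathbf T}_{\epsilon}$ of $\widehat{\mathbf T}_{k,\tau}$, use the (near) optimality $r_n(\widehat{\mathbf T}_{k,\tau}\mathbf\Lambda)\leqslant r_n(\mathbf T^*\mathbf\Lambda)$, and absorb all the $\epsilon$-errors in the last term.

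The main obstacle, already identified above, is the discretization of $r_n$: unlike $R$, it involves the unbounded $\xi_i$'s, so the estimate cannot be purely deterministic. This is what forces the $\log(1/\alpha)\,\epsilon$ contribution and requires one further application of a Bernstein tail bound for $\max_i|\xi_i|$ on top of Proposition~\ref{preliminary_risk_bound}. All the other steps are routine bookkeeping, the choice of the net itself being deferred to Cand\`es-Plan's Lemma~3.1 in the proof of Theorem~\ref{explicit_risk_bound}.
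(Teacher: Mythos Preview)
Your argument for part~(1) is essentially identical to the paper's: Markov on Proposition~\ref{preliminary_risk_bound}, union bound over $\mathcal S_{k,\tau}$, optimality of $\widehat{\mathbf T}_{k,\tau}$, and the same choice $\lambda=\tfrac{n}{2}(\mathfrak c_{\ref{preliminary_risk_bound}}^{-1}\wedge\lambda^*)$.

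For part~(2) your outline is also the paper's, but there is one concrete discrepancy in the discretization of $r_n$. You propose to control the unbounded $\xi_i$'s through a maximal bound on $\max_{i\leqslant n}|\xi_i|$, which for sub-exponential variables costs $\mathfrak c_\xi\log(n/\alpha)$ and would leave an extra $\log n$ in the $\epsilon$-term; this does not reproduce the stated constant $8\mathfrak m_{\mathbf\Lambda}\mathfrak c_\xi\log(1/\alpha)$. The paper avoids this by noting that only the \emph{average} enters:
\[
|r_n(\mathbf T\mathbf\Lambda)-r_n(\mathbf T^\epsilon\mathbf\Lambda)|
\leqslant
2\mathfrak m_{\mathbf\Lambda}\Bigl(\tfrac{1}{n}\sum_{i=1}^n|\xi_i|+\mathfrak m_\varepsilon+2\mathfrak m_0\Bigr)\epsilon,
\]
and then applies Bernstein's inequality (Lemma~\ref{Bernstein_independent_rv}) to $\tfrac{1}{n}\sum_i|\xi_i|$, which yields $\mathfrak v_\xi^{1/2}+\mathfrak v_\xi/(2n\mathfrak c_\xi)+2\mathfrak c_\xi\log(1/\alpha)$ with no $\log n$. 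A second minor point: the net in the paper is taken in the Fr\"obenius norm on $\mathbf T$ (not in $\sup_{j,t}|(\mathbf T\mathbf\Lambda)_{j,t}|$), and the passage to $\langle\mathbf X_i,(\mathbf T-\mathbf T^\epsilon)\mathbf\Lambda\rangle_{\mathcal F}$ goes through $\langle\mathbf X_i\mathbf\Lambda^*,\mathbf T-\mathbf T^\epsilon\rangle_{\mathcal F}$ and Cauchy--Schwarz, giving $\|\mathbf X_i\mathbf\Lambda^*\|_{\mathcal F}\leqslant\mathfrak m_{\mathbf\Lambda}$. With these two adjustments your sketch becomes the paper's proof verbatim.
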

\begin{proof}[Proof of Theorem~\ref{risk_bound}]
\begin{enumerate}
 \item Assume that $|\mathcal S_{k,\tau}| <\infty$. For any $x > 0$, $\lambda\in (0,n\lambda^*)$ and $\mathcal S\subset\mathcal M_{d,\tau}(\mathbb R)$, consider the events
 \begin{displaymath}
 \Omega_{x,\lambda,\mathcal S}^{-}(\mathbf T) :=
 \left\{
 \left(1 -\mathfrak c_{\ref{preliminary_risk_bound}}\frac{\lambda}{n}\right)\|
 (\mathbf T -\mathbf T^0)\mathbf\Lambda\|_{\mathcal F,\Pi}^{2}
 -(r_n(\mathbf T\mathbf\Lambda) - r_n(\mathbf T^0\mathbf\Lambda)) > 4x
 \right\}
 \textrm{, }
 \mathbf T\in\mathcal S
 \end{displaymath}
 and
 \begin{displaymath}
 \Omega_{x,\lambda,\mathcal S}^{-} :=
 \bigcup_{\mathbf T\in\mathcal S}\Omega_{x,\lambda,\mathcal S}^{-}(\mathbf T).
 \end{displaymath}
 By Markov's inequality together with Proposition \ref{preliminary_risk_bound}, Inequality (\ref{preliminary_risk_bound_2}),
 \begin{eqnarray*}
  \mathbb P(\Omega_{x,\lambda,\mathcal S_{k,\tau}}^{-})
  & \leqslant &
  \sum_{\mathbf T\in\mathcal S_{k,\tau}}
  \mathbb P\left(
  \exp\left(\frac{\lambda}{4}\left(
  \left(1 -\mathfrak c_{\ref{preliminary_risk_bound}}\frac{\lambda}{n}\right)
  (R(\mathbf T\mathbf\Lambda) - R(\mathbf T^0\mathbf\Lambda))
  -(r_n(\mathbf T\mathbf\Lambda) - r_n(\mathbf T^0\mathbf\Lambda))\right)\right) >
  e^{\lambda x}\right)\\
  & \leqslant &
  |\mathcal S_{k,\tau}|e^{-\lambda x}.
 \end{eqnarray*}
 In the same way, with
 \begin{displaymath}
 \Omega_{x,\lambda,\mathcal S}^{+}(\mathbf T) :=
 \left\{
 -\left(1 +\mathfrak c_{\ref{preliminary_risk_bound}}\frac{\lambda}{n}\right)\|
 (\mathbf T -\mathbf T^0)\mathbf\Lambda\|_{\mathcal F,\Pi}^{2}
 + r_n(\mathbf T\mathbf\Lambda) - r_n(\mathbf T^0\mathbf\Lambda) > 4x\right\}
 \textrm{, }\mathbf T\in\mathcal S
 \end{displaymath}
 and
 \begin{displaymath}
 \Omega_{x,\lambda,\mathcal S}^{+} :=
 \bigcup_{\mathbf T\in\mathcal S}\Omega_{x,\lambda,\mathcal S}^{+}(\mathbf T),
 \end{displaymath}
 by Markov's inequality together with Proposition \ref{preliminary_risk_bound}, Inequality (\ref{preliminary_risk_bound_1}), $\mathbb P(\Omega_{x,\lambda,\mathcal S_{k,\tau}}^{+})\leqslant |\mathcal S_{k,\tau}|e^{-\lambda x}$. Then,
 \begin{displaymath}
 \mathbb P(\Omega_{x,\lambda,\mathcal S_{k,\tau}})\geqslant
 1 - 2|\mathcal S_{k,\tau}|e^{-\lambda x}
 \end{displaymath}
 with
 \begin{displaymath}
 \Omega_{x,\lambda,\mathcal S} := (\Omega_{x,\lambda,\mathcal S}^{-})^c\cap (\Omega_{x,\lambda,\mathcal S}^{+})^c\subset
 \Omega_{x,\lambda,\mathcal S}^{-}(\widehat{\mathbf T}_{k,\tau})^c\cap
 \Omega_{x,\lambda,\mathcal S}^{+}(\widehat{\mathbf T}_{k,\tau})^c
 =:\Omega_{x,\lambda,\mathcal S_{k,\tau}}(\widehat{\mathbf T}_{k,\tau}).
 \end{displaymath}
 Moreover, on the event $\Omega_{x,\lambda,\mathcal S_{k,\tau}}$, by the definition of $\widehat{\mathbf T}_{k,\tau}$,
 \begin{eqnarray*}
  \|\widehat{\mathbf\Theta}_{k,\tau} -\mathbf\Theta^0\|_{\mathcal F,\Pi}^{2}
  & \leqslant &
  \left(1 -\mathfrak c_{\ref{preliminary_risk_bound}}\frac{\lambda}{n}\right)^{-1}
  (r_n(\widehat{\mathbf T}_{k,\tau}\mathbf\Lambda) - r_n(\mathbf T^0\mathbf\Lambda) + 4x)\\
  & = &
  \left(1 -\mathfrak c_{\ref{preliminary_risk_bound}}\frac{\lambda}{n}\right)^{-1}
  \left(
  \min_{\mathbf T\in\mathcal S_{k,\tau}}
  \{r_n(\mathbf T\mathbf\Lambda) - r_n(\mathbf T^0\mathbf\Lambda)\} + 4x\right)\\
  & \leqslant &
  \frac{1 +\mathfrak c_{\ref{preliminary_risk_bound}}\lambda n^{-1}}{
  1 -\mathfrak c_{\ref{preliminary_risk_bound}}\lambda n^{-1}}
  \min_{\mathbf T\in\mathcal S_{k,\tau}}
  \|(\mathbf T -\mathbf T^0)\mathbf\Lambda\|_{\mathcal F,\Pi}^{2}
  +\frac{8x}{1 -\mathfrak c_{\ref{preliminary_risk_bound}}\lambda n^{-1}}.
 \end{eqnarray*}
 So, for any $\alpha\in ]0,1[$, with probability larger than $1 -\alpha$,
 \begin{displaymath}
 \|\widehat{\mathbf\Theta}_{k,\tau} -\mathbf\Theta^0\|_{\mathcal F,\Pi}^{2}
 \leqslant
 \frac{1 +\mathfrak c_{\ref{preliminary_risk_bound}}\lambda n^{-1}}{
 1 -\mathfrak c_{\ref{preliminary_risk_bound}}\lambda n^{-1}}
 \min_{\mathbf T\in\mathcal S_{k,\tau}}
 \|(\mathbf T -\mathbf T^0)\mathbf\Lambda\|_{\mathcal F,\Pi}^{2}
 +\frac{8\lambda^{-1}\log(2\alpha^{-1}|\mathcal S_{k,\tau}|)}{1 -\mathfrak c_{\ref{preliminary_risk_bound}}\lambda n^{-1}}.
 \end{displaymath}
 Now, let us take
 \begin{displaymath}
 \lambda =
 \frac{n}{2}\left(\frac{1}{\mathfrak c_{\ref{preliminary_risk_bound}}}\wedge\lambda^*\right)
 \in (0,n\lambda^*)
 \textrm{ and }
 x =
 \frac{1}{\lambda}\log\left(\frac{2}{\alpha}|\mathcal S_{k,\tau}|\right).
 \end{displaymath}
 In particular, $\mathfrak c_{\ref{preliminary_risk_bound}}\lambda n^{-1}\leqslant 1/2$, and then
 \begin{displaymath}
 \frac{1 +\mathfrak c_{\ref{preliminary_risk_bound}}\lambda n^{-1}}{
 1 -\mathfrak c_{\ref{preliminary_risk_bound}}\lambda n^{-1}}
 \leqslant 3
 \textrm{ and }
 \frac{8\lambda^{-1}}{
 1 -\mathfrak c_{\ref{preliminary_risk_bound}}\lambda n^{-1}}
 \leqslant 32
 \left(\frac{1}{\mathfrak c_{\ref{preliminary_risk_bound}}}\wedge\lambda^*\right)^{-1}\frac{1}{n}.
 \end{displaymath}
 Therefore, with probability larger than $1 -\alpha$,
 \begin{displaymath}
 \|\widehat{\mathbf\Theta}_{k,\tau} -\mathbf\Theta^0\|_{\mathcal F,\Pi}^{2}
 \leqslant
 3\min_{\mathbf T\in\mathcal S_{k,\tau}}
 \|(\mathbf T -\mathbf T^0)\mathbf\Lambda\|_{\mathcal F,\Pi}^{2}
 + 32\left(\frac{1}{\mathfrak c_{\ref{preliminary_risk_bound}}}\wedge\lambda^*\right)^{-1}
 \frac{1}{n}
 \log\left(\frac{2}{\alpha}|\mathcal S_{k,\tau}|\right).
 \end{displaymath}
 \item Now, assume that $|\mathcal S_{k,\tau}| =\infty$. Since $\textrm{dim}(\mathcal M_{d,\tau}(\mathbb R)) <\infty$ and $\mathcal S_{k,\tau}$ is a bounded subset of $\mathcal M_{d,\tau}(\mathbb R)$ (equipped with $\mathbf T\mapsto\sup_{j,t}|\mathbf T_{j,t}|$), $\mathcal S_{k,\tau}$ is compact in $(\mathcal M_{d,\tau}(\mathbb R),\|.\|_{\mathcal F})$. Then, for any $\epsilon > 0$, there exists a finite subset $\mathcal S_{k,\tau}^{\epsilon}$ of $\mathcal S_{k,\tau}$ such that
 \begin{equation}\label{risk_bound_1}
 \forall\mathbf T\in\mathcal S_{k,\tau},
 \exists\mathbf T^{\epsilon}\in\mathcal S_{k,\tau}^{\epsilon} :
 \|\mathbf T -\mathbf T^{\epsilon}\|_{\mathcal F}\leqslant\epsilon.
 \end{equation}
 On the one hand, for any $\mathbf T\in\mathcal S_{k,\tau}$ and $\mathbf T^{\epsilon}\in\mathcal S_{k,\tau}^{\epsilon}$ satisfying (\ref{risk_bound_1}), since $\langle\mathbf X_i,(\mathbf T -\mathbf T^{\epsilon})\mathbf\Lambda\rangle_{\mathcal F} =\langle\mathbf X_i\mathbf\Lambda^*,\mathbf T -\mathbf T^{\epsilon}\rangle_{\mathcal F}$ for every $i\in\{1,\dots,n\}$,
 \begin{eqnarray}
  |r_n(\mathbf T\mathbf\Lambda) - r_n(\mathbf T^{\epsilon}\mathbf\Lambda)|
  & \leqslant &
  \frac{1}{n}\sum_{i = 1}^{n}
  |\langle\mathbf X_i,(\mathbf T -\mathbf T^{\epsilon})\mathbf\Lambda\rangle_{\mathcal F}
  (2Y_i -\langle\mathbf X_i,(\mathbf T +\mathbf T^{\epsilon})\mathbf\Lambda\rangle_{\mathcal F})|
  \nonumber\\
  & \leqslant &
  \frac{\epsilon}{n}\sum_{i = 1}^{n}
  \|\mathbf X_i\mathbf\Lambda^*\|_{\mathcal F}\left(
  2|Y_i| +\sup_{j,t}\left|\sum_{\ell = 1}^{\tau}(\mathbf T +\mathbf T^{\epsilon})_{j,\ell}
  \mathbf\Lambda_{\ell,t}\right|\right)
  \nonumber\\
  \label{risk_bound_2}
  & \leqslant &
  \epsilon\mathfrak m_{\mathbf\Lambda}
  \left(\frac{2}{n}\sum_{i = 1}^{n}|Y_i| + 2\mathfrak m_0\right)
  \leqslant\mathfrak c_1(\xi_1,\dots,\xi_n)\epsilon
 \end{eqnarray}
 with
 \begin{displaymath}
 \mathfrak c_1(\xi_1,\dots,\xi_n) :=
 2\mathfrak m_{\mathbf\Lambda}
 \left(\frac{1}{n}\sum_{i = 1}^{n}|\xi_i| +
 \mathfrak m_{\varepsilon} + 2\mathfrak m_0\right),
 \end{displaymath}
 and thanks to Equality (\ref{preliminary_risk_bound_3}),
 \begin{eqnarray}
  |R(\mathbf T\mathbf\Lambda) - R(\mathbf T^{\epsilon}\mathbf\Lambda)|
  & = &
  |R(\mathbf T\mathbf\Lambda) - R(\mathbf T^0\mathbf\Lambda)
  -(R(\mathbf T^{\epsilon}\mathbf\Lambda) - R(\mathbf T^0\mathbf\Lambda))|
  \nonumber\\
  & = &
  |\|(\mathbf T -\mathbf T^0)\mathbf\Lambda\|_{\mathcal F,\Pi}^{2} -
  \|(\mathbf T^{\epsilon} -\mathbf T^0)\mathbf\Lambda\|_{\mathcal F,\Pi}^{2}|
  \nonumber\\
  \label{risk_bound_3}
  & \leqslant &
  \mathbb E(|\langle\mathbf X_i,(\mathbf T -\mathbf T^{\epsilon})\mathbf\Lambda\rangle_{\mathcal F}
  \langle\mathbf X_i,(\mathbf T +\mathbf T^{\epsilon} - 2\mathbf T^0)\mathbf\Lambda\rangle_{\mathcal F}|)
  \leqslant\mathfrak c_2\epsilon
 \end{eqnarray}
 with $\mathfrak c_2 = 4\mathfrak m_0\mathfrak m_{\mathbf\Lambda}$. On the other hand, consider
 \begin{equation}\label{risk_bound_4}
 \widehat{\mathbf T}_{k,\tau}^{\epsilon}
 =\arg\min_{\mathbf T\in\mathcal S_{k,\tau}^{\epsilon}}
 \|\mathbf T -\widehat{\mathbf T}_{k,\tau}\|_{\mathcal F}.
 \end{equation}
 On the event $\Omega_{x,\lambda,\mathcal S_{k,\tau}^{\epsilon}}$ with $x > 0$ and $\lambda\in (0,n\lambda^*)$, by the definitions of $\widehat{\mathbf T}_{k,\tau}^{\epsilon}$ and $\widehat{\mathbf T}_{k,\tau}$, and thanks to Inequalities (\ref{risk_bound_2}) and (\ref{risk_bound_3}),
 \begin{eqnarray*}
  \|\widehat{\mathbf\Theta}_{k,\tau} -\mathbf\Theta^0\|_{\mathcal F,\Pi}^{2}
  & \leqslant &
  \|(\widehat{\mathbf T}_{k,\tau}^{\epsilon} -\mathbf T^0)\mathbf\Lambda\|_{\mathcal F,\Pi}^{2} +
  \mathfrak c_2\epsilon
  \leqslant  
  \left(1 -\mathfrak c_{\ref{preliminary_risk_bound}}\frac{\lambda}{n}\right)^{-1}
  (r_n(\widehat{\mathbf T}_{k,\tau}^{\epsilon}\mathbf\Lambda) - r_n(\mathbf T^0\mathbf\Lambda) + 4x) +
  \mathfrak c_2\epsilon\\
  & \leqslant &
  \left(1 -\mathfrak c_{\ref{preliminary_risk_bound}}\frac{\lambda}{n}\right)^{-1}
  [r_n(\widehat{\mathbf T}_{k,\tau}\mathbf\Lambda) - r_n(\mathbf T^0\mathbf\Lambda) 
  +\mathfrak c_1(\xi_1,\dots,\xi_n)\epsilon + 4x] +
  \mathfrak c_2\epsilon\\
  & = &
  \left(1 -\mathfrak c_{\ref{preliminary_risk_bound}}\frac{\lambda}{n}\right)^{-1}
  \left[
  \min_{\mathbf T\in\mathcal S_{k,\tau}}
  \{r_n(\mathbf T\mathbf\Lambda) - r_n(\mathbf T^0\mathbf\Lambda)\} +
  \mathfrak c_1(\xi_1,\dots,\xi_n)\epsilon + 4x\right]
  +\mathfrak c_2\epsilon\\
  & \leqslant &
  \frac{1 +\mathfrak c_{\ref{preliminary_risk_bound}}\lambda n^{-1}}{
  1 -\mathfrak c_{\ref{preliminary_risk_bound}}\lambda n^{-1}}
  \min_{\mathbf T\in\mathcal S_{k,\tau}}
  \|(\mathbf T -\mathbf T^0)\mathbf\Lambda\|_{\mathcal F,\Pi}^{2}
  +\frac{8x}{1 -\mathfrak c_{\ref{preliminary_risk_bound}}\lambda n^{-1}} +
  \left[\frac{\mathfrak c_1(\xi_1,\dots,\xi_n)}{1 -\mathfrak c_{\ref{preliminary_risk_bound}}\lambda n^{-1}}
  +\mathfrak c_2\right]\epsilon.
 \end{eqnarray*}
 So, by taking
 \begin{displaymath}
 \lambda =
 \frac{n}{2}\left(\frac{1}{\mathfrak c_{\ref{preliminary_risk_bound}}}\wedge\lambda^*\right)
 \textrm{ and }
 x =
 \frac{1}{\lambda}\log\left(\frac{2}{\alpha}|\mathcal S_{k,\tau}^{\epsilon}|\right),
 \end{displaymath}
 as in the proof of Theorem \ref{risk_bound}.(1), with probability larger than $1 -\alpha$,
 \begin{eqnarray}
  \label{risk_bound_5}
  \|\widehat{\mathbf\Theta}_{k,\tau} -\mathbf\Theta^0\|_{\mathcal F,\Pi}^{2}
  & \leqslant &
  3\min_{\mathbf T\in\mathcal S_{k,\tau}}
  \|(\mathbf T -\mathbf T^0)\mathbf\Lambda\|_{\mathcal F,\Pi}^2 +
  32\left(\frac{1}{\mathfrak c_{\ref{preliminary_risk_bound}}}\wedge\lambda^*\right)^{-1}
  \frac{1}{n}
  \log\left(\frac{2}{\alpha}|\mathcal S_{k,\tau}^{\epsilon}|\right)\\
  & & \hspace{4cm} +
  \left[4\mathfrak m_{\mathbf\Lambda}\left(\frac{1}{n}\sum_{i = 1}^{n}|\xi_i| +
  \mathfrak m_{\varepsilon} + 2\mathfrak m_0\right) +\mathfrak c_2\right]\epsilon.
  \nonumber
 \end{eqnarray}
 Thanks to Markov's inequality together with Lemma~\ref{Bernstein_independent_rv}, for $\lambda_0 = 1/(2n\mathfrak c_{\xi})$,
 \begin{eqnarray*}
  \mathbb P\left(\sum_{i = 1}^{n}|\xi_i| >
  \sum_{i = 1}^{n}\mathbb E(|\xi_i|) + s\right)
  & \leqslant &
  \exp\left[\frac{n\mathfrak v_{\xi}\lambda_{0}^{2}}{2(1 - n\mathfrak c_{\xi}\lambda_0)}
  -\lambda_0s\right]\\
  & = &
  \exp\left(\frac{\mathfrak v_{\xi}}{4n\mathfrak c_{\xi}^{2}} -
  \frac{s}{2n\mathfrak c_{\xi}}\right) =\alpha
 \end{eqnarray*}
 with
 \begin{displaymath}
 s =\frac{\mathfrak v_{\xi}}{2\mathfrak c_{\xi}} +
 2n\mathfrak c_{\xi}\log\left(\frac{1}{\alpha}\right).
 \end{displaymath}
 Then, since $\mathbb E(|\xi_i|)\leqslant\mathbb E(\xi_{i}^{2})^{1/2}\leqslant\mathfrak v_{\xi}^{1/2}$ for every $i\in\{1,\dots,n\}$,
 \begin{equation}\label{risk_bound_6}
 \mathbb P\left[\frac{1}{n}\sum_{i = 1}^{n}|\xi_i| >
 \mathfrak v_{\xi}^{1/2} +\frac{\mathfrak v_{\xi}}{2n\mathfrak c_{\xi}} +
 2\mathfrak c_{\xi}\log\left(\frac{1}{\alpha}\right)\right]\leqslant\alpha.
 \end{equation}
 Finally, note that if $\mathbb P(U > V + c)\leqslant\alpha$ and $\mathbb P(V > v)\leqslant\alpha$ with $c,v\in\mathbb R_+$ and $(U,V)$ a $\mathbb R^2$-valued random variable, then
 \begin{eqnarray}
  \mathbb P(U > v + c) & = &
  \mathbb P(U > v + c,V > v) +\mathbb P(U > v + c,V\leqslant v)
  \nonumber\\
  \label{risk_bound_7}
  & \leqslant &
  \mathbb P(V > v) +\mathbb P(U > V + c,V\leqslant v)
  \leqslant 2\alpha.
 \end{eqnarray}
 Therefore, by (\ref{risk_bound_5}) and (\ref{risk_bound_6}), with probability larger than $1 - 2\alpha$,
 \begin{eqnarray*}
  \|\widehat{\mathbf\Theta}_{k,\tau} -\mathbf\Theta^0\|_{\mathcal F,\Pi}^{2}
  & \leqslant &
  3\min_{\mathbf T\in\mathcal S_{k,\tau}}
  \|(\mathbf T -\mathbf T^0)\mathbf\Lambda\|_{\mathcal F,\Pi}^2 +
  32\left(\frac{1}{\mathfrak c_{\ref{preliminary_risk_bound}}}\wedge\lambda^*\right)^{-1}
  \frac{1}{n}
  \log\left(\frac{2}{\alpha}|\mathcal S_{k,\tau}^{\epsilon}|\right)\\
  & &
  \hspace{3cm}
  +\left[4\mathfrak m_{\mathbf\Lambda}\left(
  2\mathfrak c_{\xi}\log\left(\frac{1}{\alpha}\right) +
  \mathfrak v_{\xi}^{1/2} +
  \frac{\mathfrak v_{\xi}}{2\mathfrak c_{\xi}} +\mathfrak m_{\varepsilon} + 2\mathfrak m_0\right) +\mathfrak c_2\right]\epsilon.
\end{eqnarray*}
\end{enumerate}
\end{proof}
%


%
\subsection{Proof of Theorem \ref{explicit_risk_bound}}
The proof is dissected in two steps:
\\
\\
\textbf{Step 1.} Consider
\begin{displaymath}
\mathcal M_{d,\tau,k}(\mathbb R) :=
\{\mathbf T\in\mathcal M_{d,\tau}(\mathbb R) :\textrm{rank}(\mathbf T) = k\}.
\end{displaymath}
For every $\mathbf T\in\mathcal M_{d,\tau,k}(\mathbb R)$ and $\rho > 0$, let us denote the closed ball (resp. the sphere) of center $\mathbf T$ and of radius $\rho$ of $\mathcal M_{d,\tau,k}(\mathbb R)$ by $\mathbb B_k(\mathbf T,\rho)$ (resp. $\mathbb S_k(\mathbf T,\rho)$). For any $\epsilon > 0$, thanks to Cand\`es and Plan \cite{CP11}, Lemma 3.1, there exists an $\epsilon$-net $\mathbb S_{k}^{\epsilon}(0,1)$ covering $\mathbb S_k(0,1)$ and such that
\begin{displaymath}
|\mathbb S_{k}^{\epsilon}(0,1)|\leqslant
\left(\frac{9}{\epsilon}\right)^{k(d +\tau + 1)}.
\end{displaymath}
Then, for every $\rho > 0$, there exists an $\epsilon$-net $\mathbb S_{k}^{\epsilon}(0,\rho)$ covering $\mathbb S_k(0,\rho)$ and such that
\begin{displaymath}
|\mathbb S_{k}^{\epsilon}(0,\rho)|\leqslant
\left(\frac{9\rho}{\epsilon}\right)^{k(d +\tau + 1)}.
\end{displaymath}
Moreover, for any $\rho^* > 0$,
\begin{displaymath}
\mathbb B_k(0,\rho^*) =
\bigcup_{\rho\in [0,\rho^*]}
\mathbb S_k(0,\rho).
\end{displaymath}
So,
\begin{displaymath}
\mathbb B_{k}^{\epsilon}(0,\rho^*) :=
\bigcup_{j = 0}^{[\rho^*/\epsilon] + 1}
\mathbb S_{k}^{\epsilon}(0,j\epsilon)
\end{displaymath}
is an $\epsilon$-net covering $\mathbb B_k(0,\rho^*)$ and such that
\begin{displaymath}
|\mathbb B_{k}^{\epsilon}(0,\rho^*)|
\leqslant
\sum_{j = 0}^{[\rho^*/\epsilon] + 1}
|\mathbb S_{k}^{\epsilon}(0,j\epsilon)|
\leqslant
\left(\left[\frac{\rho^*}{\epsilon}\right] + 2\right)
\left(\frac{9\rho^*}{\epsilon}\right)^{k(d +\tau + 1)}.
\end{displaymath}
If in addition $\rho^*\geqslant\epsilon$, then
\begin{displaymath}
|\mathbb B_{k}^{\epsilon}(0,\rho^*)|
\leqslant
\frac{3\rho^*}{\epsilon}\left(\frac{9\rho^*}{\epsilon}\right)^{k(d +\tau + 1)}
\leqslant
\left(\frac{9\rho^*}{\epsilon}\right)^{2k(d +\tau)}.
\end{displaymath}
\textbf{Step 2.} For any $\mathbf T\in\mathcal S_{k,\tau}$,
\begin{displaymath}
\sup_{j,t}|\mathbf T_{j,t}|
\leqslant\frac{\mathfrak m_0}{\mathfrak m_{\mathbf\Lambda}(\tau)}.
\end{displaymath}
Then,
\begin{displaymath}
\|\mathbf T\|_{\mathcal F}
=\left(\sum_{j = 1}^{d}\sum_{t = 1}^{\tau}\mathbf T_{j,t}^{2}\right)^{1/2}
\leqslant
\rho_{d,\tau}^{*} :=
\mathfrak m_0\frac{d^{1/2}\tau^{1/2}}{\mathfrak m_{\bf\Lambda}(\tau)}.
\end{displaymath}
So, $\mathcal S_{k,\tau}\subset\mathbb B_k(0,\rho_{d,\tau}^{*})$, and by the first step of the proof, there exists an $\epsilon$-net $\mathcal S_{k,\tau}^{\epsilon}$ covering $\mathcal S_{k,\tau}$ and such that
\begin{displaymath}
|\mathcal S_{k,\tau}^{\epsilon}|
\leqslant
\left(\frac{9\rho_{d,\tau}^{*}}{\epsilon}\right)^{2k(d +\tau)}
=\left(9\mathfrak m_0\frac{d^{1/2}\tau^{1/2}}
{\mathfrak m_{\bf\Lambda}(\tau)\epsilon}\right)^{2k(d +\tau)}.
\end{displaymath}
By taking $\epsilon = 9\mathfrak m_0d^{1/2}\tau^{1/2}\mathfrak m_{\bf\Lambda}(\tau)^{-1}n^{-2}$, thanks to Theorem \ref{risk_bound}.(2), with probability larger than $1 -\alpha$,
\begin{eqnarray*}
 \|\widehat{\mathbf\Theta}_{k,\tau} -\mathbf\Theta^0\|_{\mathcal F,\Pi}^{2}
 & \leqslant &
 3\min_{\mathbf T\in\mathcal S_{k,\tau}}
 \|(\mathbf T -\mathbf T^0)\mathbf\Lambda\|_{\mathcal F,\Pi}^2\\
 & & +
 \frac{\mathfrak c_{\ref{risk_bound},1}}{n}\left[\log\left(\frac{2}{\alpha}\right) +
 2k(d +\tau)
 \log\left(9\mathfrak m_0\frac{d^{1/2}\tau^{1/2}}{\mathfrak m_{\bf\Lambda}(\tau)\epsilon}\right)\right] +
 \left[\mathfrak c_{\ref{risk_bound},2} +
 8\mathfrak m_{\mathbf \Lambda}\mathfrak c_{\xi}\log\left(\frac{1}{\alpha}\right)\right]
 \epsilon\\
 & = &
 3\min_{\mathbf T\in\mathcal S_{k,\tau}}
 \|(\mathbf T -\mathbf T^0)\mathbf\Lambda\|_{\mathcal F,\Pi}^2\\
 & & +
 \frac{\mathfrak c_{\ref{risk_bound},1}}{n}\left[\log\left(\frac{2}{\alpha}\right) +
 4k(d +\tau)\log(n)\right] +
 9 \mathfrak m_0\frac{d^{1/2}\tau^{1/2}}{\mathfrak m_{\bf\Lambda}(\tau)n^2}
 \left[\mathfrak c_{\ref{risk_bound},2} +
 8\mathfrak m_{\mathbf\Lambda}\mathfrak c_{\xi}\log\left(\frac{1}{\alpha}\right)\right].
\end{eqnarray*}
Therefore, since $n\geqslant\max(d,\tau)$ and $\mathfrak m_{\bf\Lambda}(\tau)\geqslant 1$, with probability larger than $1 - 2\alpha$,
\begin{eqnarray*}
 \|\widehat{\mathbf\Theta}_{k,\tau} -\mathbf\Theta^0\|_{\mathcal F,\Pi}^{2}
 & \leqslant &
 3\min_{\mathbf T\in\mathcal S_{k,\tau}}
 \|(\mathbf T -\mathbf T^0)\mathbf\Lambda\|_{\mathcal F,\Pi}^2\\
 & &
 + (4\mathfrak c_{\ref{risk_bound},1} +
 9\mathfrak m_0c_{\ref{risk_bound},2})k(d +\tau)\frac{\log(n)}{n} 
 + \frac{\mathfrak c_{\ref{risk_bound},1} +
 72\mathfrak m_0\mathfrak m_{\mathbf\Lambda}\mathfrak c_{\xi}}{n}
 \log\left(\frac{2}{\alpha}\right).
\end{eqnarray*}
Let us replace $\alpha$ by $\alpha/2$ to end the proof.
%


%
\subsection{Proof of Theorem~\ref{lower_bound}}
Put $\overline k = 2^{\lfloor\log_2(k)\rfloor}$, and note that $k/2\leqslant\overline k\leqslant k$. Fix $a > 0$ and define the set of matrices
\begin{displaymath}
\mathcal A =
\left\{{\bf A} = ({\bf A}_{i,j})_{1\leqslant i\leqslant d\vee\tau,1\leqslant j\leqslant\overline k} :
{\bf A}_{i,j}\in\{0,a\}\right\}.
\end{displaymath}
By Varshamov-Gilbert bound, there is a finite subset $\mathcal B\subset\mathcal A$ with ${\rm card}(\mathcal B)\geqslant 2^{\frac{\overline k(d\vee\tau)}{8}} + 1$, $0\in\mathcal B$, and each pair ${\bf A}\neq {\bf A}'$ in $\mathcal B$ differ by at least $\overline k(d\vee\tau)$ coordinates. This implies
\begin{displaymath}
\|{\bf A} - {\bf A}'\|_{\mathcal F}^{2}
\geqslant
\frac{\overline k(d\vee\tau)}{8}a^2\geqslant\frac{k(d\vee\tau)}{16}a^2.
\end{displaymath}
For any ${\bf A}$, define by block $\overline{\bf A} = ({\bf A}|{\bf 0})$ of dimension $(d\vee\tau)\times k$ (so the ${\bf 0}$ has $k -\overline k$ columns). We then define $\widetilde{\bf A}$ of dimension $d\times\tau$. The construction differs depending on $d$ and $\tau$:
\begin{itemize}
 \item If $d\geqslant\tau$,
 \begin{displaymath}
 \widetilde{\bf A} = ({\bf A}|\dots|{\bf A}|{\bf 0}).
 \end{displaymath}
 \item If $d < \tau$,
 \begin{displaymath}
 \widetilde{\bf A} = ({\bf A}|\dots|{\bf A}|{\bf 0})^*.
 \end{displaymath}
\end{itemize}
Note that this is clearly inspired by the construction in the proof of Theorem 5 in~\cite{KLT11}, however, here, we have to take care that, for $a$ small enough, each $\widetilde{\bf A}\in\mathcal A$ is also in $\mathcal M_{d,k,\tau}$. In order to do so, we introduce the vectors in $\mathbb R^{\overline k}$:
\begin{eqnarray*}
 v[1] & = &
 \sqrt{\frac{1}{k}}(\underbrace{
 \begin{array}{rcl}
  1 & \dots & 1
 \end{array}}_{\overline k})^*,\\
 v[2] & = &
 \sqrt{\frac{1}{k}}(\underbrace{
 \begin{array}{rcl}
  1 & \dots & 1
 \end{array}}_{\overline k/2}|
 \underbrace{
 \begin{array}{rcl}
  -1 & \dots & -1
 \end{array}}_{\overline k/2})^*,\\
 & \vdots & \\
 v[\overline k] & = & \sqrt{\frac{1}{k}}
 (\underbrace{
 \begin{array}{rcccl}
  1 & -1 & \dots & 1 & -1
 \end{array}}_{\overline k})^*.
\end{eqnarray*}
Now, remark that for ${\bf A}\in\mathcal{A}$ we have
\begin{displaymath}
{\bf A} =\underbrace{\sqrt{ak}\left(
\begin{array}{c}
 v[1]^*\\
 \hline\vdots\\
 \hline
 v[\overline k]^*
\end{array}
\right)}_{\mathbf B}
\underbrace{
\left(\left.
\sum_{i = 1}^{n}v[i]\mathbf 1_{{\bf A}_{i,1}\neq 0}\right|
\dots\left|
\sum_{i = 1}^{n}v[i]\mathbf 1_{{\bf A}_{i,k}\neq 0}\right.
\right)
\sqrt{\frac{a}{k}}}_{\mathbf C}
\end{displaymath}
and under this decomposition, it is clear that the entries of $\mathbf B$ and $\mathbf C$ are in $[0,\sqrt a]$. Playing with blocks, this gives trivially to a decomposition $\widetilde{\bf A} = {\bf U}{\bf V}$ where ${\bf U}$ is $d\times k$, ${\bf V}$ is $k\times \tau$ and the entries of ${\bf U}$ and ${\bf V}$ are also in $[0,\sqrt{a }]$. In other words, $\widetilde{\bf A}\in\mathcal M_{d,k,\tau}$ holds as soon as $a\leqslant\mathfrak m_0/k$.
Now, let $\mathbb P_{\bf A}$ be the data-generating distribution when ${\bf\Theta}^0 =\widetilde{\bf A}$ for $\mathbf A\in\mathcal B$, and ${\rm KL}$ be the Kullback-Leibler divergence. We have
\begin{displaymath}
{\rm KL}(\mathbb{P}_{0},\mathbb{P}_{{\bf A}})
=\frac{n}{2}\|\widetilde{\bf A}{\bf\Lambda}\|_{\mathcal F,\Pi}^{2}
\leqslant\frac{n}{2}a^2.
\end{displaymath}
Thus, we look for $a$ such that the condition
\begin{displaymath}
\frac{n}{2}a^2
\leqslant
\alpha\log({\rm card}(\mathcal B) - 1) =
\frac{\alpha\overline k(d\vee\tau)}{8}
\end{displaymath}
is satisfied for a given $0 <\alpha < 1/8$. Fix $\alpha = 1/16$. As $k\leqslant\overline k/2$, it's easy to check that
\begin{displaymath}
a =
\frac{1}{8}\sqrt{\frac{k(d\vee\tau)}{2n}}
\end{displaymath}
satisfies the condition. Also, remember that $\widetilde{\bf A}\in\mathcal M_{d,k,\tau}$ if $a\leqslant\mathfrak m_0/k$, which adds the condition
\begin{displaymath}
k\leqslant
\left(\frac{256\mathfrak m_{0}^{2}n}{d\vee\tau}\right)^{1/3}.
\end{displaymath}
Theorem 2.5 in~\cite{TSYBAKOV09} then tells us that the rate is given by the minimal distance, for ${\bf A}\neq{\bf A}'$ in $\mathcal B$:
\begin{align*}
 \|\widetilde{\bf A}{\bf\Lambda} -\widetilde{\bf A}'{\bf\Lambda}\|_{\mathcal F,\Pi}^{2}
 & =
 \frac{1}{d\tau}\|\widetilde{\bf A} -\widetilde{\bf A}'\|_{\mathcal F}^{2} =
 \frac{1}{d\tau}\left\lfloor
 \frac{d\wedge\tau}{k}\right\rfloor\|
 \widetilde{\bf A} -\widetilde{\bf A}'\|_{\mathcal F}^{2}\\
 & \geqslant
 \frac{1}{d\tau}\left\lfloor
 \frac{d\wedge\tau}{k}\right\rfloor
 \frac{k(d\vee\tau)}{16}a^2\\
 & \geqslant
 \frac{a^2}{32}
 =\frac{k(d\vee\tau)}{4096n}.
\end{align*}
%


%
\subsection{Proof of Theorem \ref{risk_bound_adaptive_estimator}}
For any $k\in\mathcal K$, let $\mathcal S_{k}^{\epsilon} :=\mathcal S_{k,\tau}^{\epsilon}$ be the $\epsilon$-net introduced in the proof of Theorem \ref{explicit_risk_bound}, and recall that for $\epsilon = 9\mathfrak m_0d^{1/2}\tau^{1/2}\mathfrak m_{\mathbf\Lambda}(\tau)^{-1}n^{-2}$,
\begin{displaymath}
|\mathcal S_{k}^{\epsilon}|
\leqslant
\left(9\mathfrak m_0
\frac{d^{1/2}\tau^{1/2}}{\mathfrak m_{\mathbf\Lambda}(\tau)\epsilon}\right)^{2k(d +\tau)} =
n^{4k(d +\tau)}.
\end{displaymath}
Then, for $\alpha\in (0,1)$ and $x_{k,\epsilon} :=\lambda^{-1}\log(2\alpha^{-1}|\mathcal K|\cdot|\mathcal S_{k}^{\epsilon}|)$ with $\lambda = n\mathfrak c_{{\rm pen}}^{-1}\in (0,n\lambda^*)$,
\begin{eqnarray}
 4x_{k,\epsilon} -\textrm{pen}(k) & = &
 \frac{4\mathfrak c_{{\rm pen}}}{n}\log\left(\frac{2}{\alpha}|\mathcal K|\cdot|\mathcal S_{k}^{\epsilon}|\right)
 -16\mathfrak c_{{\rm pen}}\frac{\log(n)}{n}k(d +\tau)
 \nonumber\\
 & \leqslant &
 \frac{4\mathfrak c_{{\rm pen}}}{n}\left[4k(d +\tau)\log(n) +\log\left(\frac{2}{\alpha}|\mathcal K|\right)\right]
 -16\mathfrak c_{{\rm pen}}\frac{\log(n)}{n}k(d +\tau)
 \nonumber\\
 \label{risk_bound_adaptive_estimator_1}
 & \leqslant &
 \frac{4\mathfrak c_{{\rm pen}}}{n}\log\left(\frac{2}{\alpha}|\mathcal K|\right) =:\mathfrak m_n.
\end{eqnarray}
Now, consider the event $\Omega_{\lambda,\epsilon} := (\Omega_{\lambda,\epsilon}^{-})^c\cap (\Omega_{\lambda,\epsilon}^{+})^c$ with
\begin{displaymath}
\Omega_{\lambda,\epsilon}^{-} :=
\bigcup_{k\in\mathcal K}
\bigcup_{\mathbf T\in\mathcal S_{k}^{\epsilon}}\Omega_{x_{k,\epsilon},\lambda,\mathcal S_{k}^{\epsilon}}^{-}(\mathbf T)
\textrm{ and }
\Omega_{\lambda,\epsilon}^{+} :=
\bigcup_{k\in\mathcal K}
\bigcup_{\mathbf T\in\mathcal S_{k}^{\epsilon}}\Omega_{x_{k,\epsilon},\lambda,\mathcal S_{k}^{\epsilon}}^{+}(\mathbf T).
\end{displaymath}
So,
\begin{displaymath}
\mathbb P(\Omega_{\lambda,\epsilon}^{c})\leqslant
\sum_{k\in\mathcal K}\sum_{\mathbf T\in\mathcal S_{k}^{\epsilon}}
[\mathbb P(\Omega_{x_{k,\epsilon},\lambda,\mathcal S_{k}^{\epsilon}}^{-}(\mathbf T)) +
\mathbb P(\Omega_{x_{k,\epsilon},\lambda,\mathcal S_{k}^{\epsilon}}^{+}(\mathbf T))]\\
\leqslant
2\sum_{k\in\mathcal K}|\mathcal S_{k}^{\epsilon}|e^{-\lambda x_{k,\epsilon}} =
\alpha
\end{displaymath}
and $\Omega_{x_{\widehat k,\epsilon},\lambda,\mathcal S_{\widehat k}^{\epsilon}}(\widehat{\mathbf T}_{\widehat k}^{\epsilon})\subset\Omega_{\lambda,\epsilon}$, where $\widehat{\mathbf T}_{k}^{\epsilon}$ is a solution of the minimization problem (\ref{risk_bound_4}) for every $k\in\mathcal K$.
\\
\\
On the event $\Omega_{\lambda,\epsilon}$, by the definition of $\widehat k$, and thanks to Inequalities (\ref{risk_bound_2}), (\ref{risk_bound_3}) and (\ref{risk_bound_4}),
\begin{eqnarray}
 \|\widehat{\mathbf\Theta} -\mathbf\Theta^0\|_{\mathcal F,\Pi}^{2}
 & \leqslant &
 \|(\widehat{\mathbf T}_{\widehat k}^{\epsilon} -\mathbf T^0)\mathbf\Lambda\|_{\mathcal F,\Pi}^{2} +
 \mathfrak c_2\epsilon
 \leqslant  
 \left(1 -\mathfrak c_{\ref{preliminary_risk_bound}}\frac{\lambda}{n}\right)^{-1}
 (r_n(\widehat{\mathbf T}_{\widehat k}^{\epsilon}\mathbf\Lambda) - r_n(\mathbf T^0\mathbf\Lambda) + 4x_{\widehat k,\epsilon}) +
 \mathfrak c_2\epsilon
 \nonumber\\
 & \leqslant &
 \left(1 -\mathfrak c_{\ref{preliminary_risk_bound}}\frac{\lambda}{n}\right)^{-1}
 (r_n(\widehat{\mathbf T}_{\widehat k}\mathbf\Lambda) - r_n(\mathbf T^0\mathbf\Lambda) +\mathfrak c_1(\xi_1,\dots,\xi_n)\epsilon + 4x_{\widehat k,\epsilon}) +
 \mathfrak c_2\epsilon
 \nonumber\\
 & = &
 \left(1 -\mathfrak c_{\ref{preliminary_risk_bound}}\frac{\lambda}{n}\right)^{-1}
 \nonumber\\
 & &
 \times\left(
 \min_{k\in\mathcal K}
 \{r_n(\widehat{\mathbf T}_k\mathbf\Lambda) - r_n(\mathbf T^0\mathbf\Lambda) +\textrm{pen}(k)\} +\mathfrak c_1(\xi_1,\dots,\xi_n)\epsilon + 4x_{\widehat k,\epsilon} -\textrm{pen}(\widehat k)\right)
 +\mathfrak c_2\epsilon
 \nonumber\\
 & \leqslant &
 \frac{1}{1 -\mathfrak c_{\ref{preliminary_risk_bound}}\lambda n^{-1}}
 \min_{k\in\mathcal K}\{(1 +\mathfrak c_{\ref{preliminary_risk_bound}}\lambda n^{-1})
 \|(\widehat{\mathbf T}_k -\mathbf T^0)\mathbf\Lambda\|_{\mathcal F,\Pi}^{2} + 4x_{k,\epsilon} +\textrm{pen}(k)\}
 \nonumber\\
 & &
 \hspace{8cm} +
 \frac{\mathfrak m_n +\mathfrak c_1(\xi_1,\dots,\xi_n)\epsilon}{1 -\mathfrak c_{\ref{preliminary_risk_bound}}\lambda n^{-1}} +\mathfrak c_2\epsilon
 \nonumber\\
 \label{risk_bound_adaptive_estimator_2}
 & \leqslant &
 2\min_{k\in\mathcal K}\{
 3/2\|(\widehat{\mathbf T}_k -\mathbf T^0)\mathbf\Lambda\|_{\mathcal F,\Pi}^{2} + 2\textrm{pen}(k)\} +
 4\mathfrak m_n + (2\mathfrak c_1(\xi_1,\dots,\xi_n) +\mathfrak c_2)\epsilon
\end{eqnarray}
with
\begin{displaymath}
\mathfrak c_1(\xi_1,\dots,\xi_n) :=
2\mathfrak m_{\mathbf\Lambda}
\left(\frac{1}{n}\sum_{i = 1}^{n}|\xi_i| +\mathfrak m_{\varepsilon} + 2\mathfrak m_0\right)
\textrm{ and }
\mathfrak c_2 = 4\mathfrak m_0\mathfrak m_{\mathbf\Lambda}.
\end{displaymath}
Moreover, by following the proof of Theorem \ref{risk_bound} and Theorem \ref{explicit_risk_bound} on the same event $\Omega_{\lambda,\epsilon}$,
\begin{displaymath}
\|(\widehat{\mathbf T}_k -\mathbf T^0)\mathbf\Lambda\|_{\mathcal F,\Pi}^{2}
\leqslant
3\min_{\mathbf T\in\mathcal S_k}
\|(\mathbf T -\mathbf T^0)\mathbf\Lambda\|_{\mathcal F,\Pi}^{2}
+\mathfrak c_{\ref{explicit_risk_bound}}\left[k(d +\tau)\frac{\log(n)}{n}
+\frac{1}{n}\log\left(\frac{2}{\alpha}|\mathcal K|\right)\right]
\end{displaymath}
for every $k\in\mathcal K$. Therefore, thanks to (\ref{risk_bound_6}), (\ref{risk_bound_7}) and (\ref{risk_bound_adaptive_estimator_2}), with probability larger than $1 - 2\alpha$,
\begin{eqnarray*}
 \|\widehat{\mathbf\Theta} -\mathbf\Theta^0\|_{\mathcal F,\Pi}^{2}
 & \leqslant &
 4\min_{k\in\mathcal K}\left\{
 3\min_{\mathbf T\in\mathcal S_k}
 \|(\mathbf T -\mathbf T^0)\mathbf\Lambda\|_{\mathcal F,\Pi}^2 +
 (\mathfrak c_{\ref{explicit_risk_bound}} + 16\mathfrak c_{{\rm pen}})k(d +\tau)\frac{\log(n)}{n}
 \right\}\\
 & & +
 \frac{4\mathfrak c_{\ref{explicit_risk_bound}} + 16\mathfrak c_{{\rm pen}}}{n}\log\left(\frac{2}{\alpha}|\mathcal K|\right)
 + 9\mathfrak m_0
 \frac{d^{1/2}\tau^{1/2}}{\mathfrak m_{\mathbf\Lambda}(\tau)n^2}
 \left[\mathfrak c_{\ref{risk_bound},2} +
 8\mathfrak m_{\bf\Lambda}\mathfrak c_{\xi}\log\left(\frac{1}{\alpha}\right)\right].
\end{eqnarray*}
To end the proof, let us replace $\alpha$ by $\alpha/2$ and note that $d^{1/2}\tau^{1/2}/(\mathfrak m_{\bf\Lambda}(\tau)n^2)\leqslant 1/n$ because $n\geqslant\max(d,\tau)$ and $\mathfrak m_{\bf\Lambda}(\tau)\geqslant 1$.
\\
\\
{\bf Acknowledgements.} This work was partially funded by CY Initiative of Excellence (grant "Investissements d'Avenir" ANR-16-IDEX-0008), Project "EcoDep" PSI-AAP2020-0000000013.
%


%

%
\end{document}